\numberwithin{equation}{section}
\theoremstyle{plain}
\newtheorem{theorem}{Theorem}[section]
\newtheorem*{Theorem1}{Theorem 1}
\newtheorem*{Theorem2}{Theorem 2}
\newtheorem*{Theorem3}{Theorem 3}
\newtheorem*{Main Theorem}{Main Theorem}
\newtheorem{proposition}[theorem]{Proposition}
\newtheorem{lemma}[theorem]{Lemma}
\newtheorem{corollary}[theorem]{Corollary}
\theoremstyle{definition}
\newtheorem{remark}[theorem]{Remark}
  \newcounter{numlist} %
  {\end{list}}%
\theoremstyle{remark}
\newtheorem{chunk}[theorem]{}
\newenvironment{bfchunk}{\begin{chunk}\textbf}{\end{chunk}}
\numberwithin{equation}{theorem}
\numberwithin{equation}{theorem}
\newcommand{\al}{\alpha}
\newcommand{\be}{\beta}
\newcommand{\wt}{\widetilde}
\newcommand{\Z}{\mathbb Z}
\newcommand{\R}{\mathbb R}
\newcommand{\Img}{\mathrm{Im}\,}
\DeclareMathOperator{\gr}{{gr_{\m}}}
\newcommand{\Ker}{\mathrm{Ker}\,}
\newcommand{\ann}{\mathrm{ann}}
\newcommand{\chara}{\mathrm{char}\,}
\newcommand{\pd}{\mathrm{pd}}
\newcommand{\Tor}{\mathrm{Tor}}
\newcommand{\kk}{\sf{k}}
\newcommand{\cx}{\mathrm{cx}}
\newcommand{\ti}{\wt}
\newcommand{\cur}{\mathrm{curv}}
\newcommand{\m}{\mathfrak{m}}
\newcommand{\n}{\mathfrak{n}}
\newcommand{\Ho}{\mathrm{H}}
\DeclareMathOperator{\Po}{P}
\DeclareMathOperator{\Hi}{{\sf H}}
\newcommand\restr[2]{{
  \left.\kern-\nulldelimiterspace 
  #1 
  \vphantom{\big|} 
  \right|_{#2} 
  }}
\begin{document}
\title[The mapping cone of an Eisenbud operator]{The mapping cone of an Eisenbud operator\\ and applications to exact zero divisors} 

\author{Liana M. \c{S}ega}
\author{Deepak Sireeshan}
\begin{abstract}
Let $(Q,\m,\sf{k})$ be a local ring that admits an exact pair of zero divisors $(f,g)$, $M$ a $Q$-module with $fM = 0$ and $U$ a free resolution of $M$ over $Q$.  We construct a degree $-2$ chain map, which we call an Einsenbud operator, on the complex  $U\otimes_QQ/(f,g)$ and use the mapping cone of the operator to study two exact sequences that relate homology over $Q$ to homology over $Q/(f)$. Several applications are given.  
\end{abstract}

\keywords{Eisenbud operator, exact zero divisor, complexity,  Poincar\'e series}

\subjclass[2010]{13D02}

\maketitle

\section*{Introduction}
Let $(Q,\m,\kk)$ be a local (meaning also commutative and Noetherian) ring with unique maximal ideal $\m$ and ${\sf k}=Q/\m$. Given a nonzero element $f$ of $Q$, we are concerned with understanding relations between homological behavior over $Q$ and over $R=Q/(f)$.  This problem is well studied when $f$ is a regular element. In this paper we consider $f$ to be a zero divisor. 

If $(F,\partial)$ is a complex of finitely generated $R$-modules, then a classical construction due to Eisenbud \cite{eisenbud} produces degree $-2$ maps $\widetilde{\tau}\colon\widetilde{F}\to \widetilde{F}$, where $(\widetilde F,\widetilde\partial)$ is a lifting of $F$ to $Q$, and thus  $\widetilde F\otimes_QR=F$. With $S=R/\ann_Q(f)R$, the  map 
\[
\tau:=\widetilde\tau\otimes_QS\colon F\otimes_RS\to F\otimes_RS
\]
is  then a degree $-2$ chain map that we refer to as an {\it Eisenbud operator}. When $f$ is a regular element (and thus $S=R$), $\tau\colon F\to F$ is also called a {\it CI operator}, and these maps have been essential in the study of free resolutions and (co)homology over complete intersection rings, see \cite{avramov_ci}. In particular, when $f$ is regular and $M$, $N$ are finitely generated $R$-modules, $\tau$ induces the maps $\gamma_i$ in an exact sequence
\begin{equation}
\label{e:exact-regular}
\cdots \to \Tor_{i+1}^Q(M,N)\xrightarrow{\alpha_{i+1}}\Tor_{i+1}^R(M,N)\xrightarrow{\gamma_i}\Tor_{i-1}^R(M,N)\to \Tor_{i}^Q(M,N)\to \cdots
\end{equation}
in which $\alpha_{i+1}$ is induced by the canonical map $Q\to R$. 
This sequence provides a tool for investigating the change in various homological invariants (e.g.~betti numbers, complexity) via the change of ring $Q\to R$, see for example \cite[Section 3]{Luchonotes}.  

We focus on the case when $f$ is an {\it exact zero divisor}, namely there exists a nonzero element $g\in Q$ such that $\ann_Q(f)=(g)$ and $\ann_Q(g)=(f)$; in this case, the pair $(f,g)$ is called an {\it exact pair of zero divisors}. Such elements bear similarities to non-zero divisors, in that we have some understanding of their homological behavior, due to the simple structure of the minimal free resolutions of the ideals they generate and of their Koszul homology. Exact zero divisors have been studied in recent literature, see \cite{hzerodiv, zerodiv, totref, ekustin}, and the ideals they generate are part of the larger class of quasi-complete intersection ideals studied in \cite{quasi}. Our goal is to explore the Eisenbud operator construction when $f$ is an exact zero divisor, use it to derive results that mirror those obtained for non-zero divisors, and give several applications. 

We start our investigation by observing that, when $f$ is a regular element, the exact sequence \eqref{e:exact-regular} can be understood as coming from the mapping cone $W$ of the Eisenbud operator $\tau$. The homology of $W\otimes_RN$ is isomorphic to $\Tor^Q(M,N)$ in this case, and this observation can be used to show exactness of the sequence \eqref{e:exact-regular}, as seen in \cref{reg_long}. In the case when $f$ is an exact zero divisor and the characteristic of the residue field is zero, we are also able to describe the mapping cone of the Eisenbud operator, and two exact sequences are needed to describe the change of ring, as described below. 
\begin{Theorem1}[\cref{mth}]
Let $(Q,\m,\sf k)$ be a local ring such that $\chara({\sf k}) = 0$. Let $(f,g)$ be an exact pair of zero divisors in $Q$ and set $R = Q/(f)$ and $S= Q/(f,g)$. Let $M$ be an $R$-module and let $(W,\partial^W)$ denote the mapping cone of the Eisenbud operator $\tau$ associated with $f$.

For any $R$-module $N$ with $gN=0$, there are two long exact sequences
\begin{align*}
    \cdots \xrightarrow{}& \Tor_{n-1}^R(M,N) \xrightarrow{}  \Ho_{n}(W \otimes_S N) \xrightarrow{\psi_{n}} \Tor_{n}^R(M,N) \xrightarrow{\delta_{n}} \Tor_{n-2}^R(M,N) \xrightarrow{} \cdots \\ 
    \cdots \xrightarrow{}& \Ho_{n+1}(W \otimes_S N) \xrightarrow{\mu_{n+1}} \Tor_{n-2}^Q(M,N) \xrightarrow{} \Tor^Q_{n}(M,N) \xrightarrow{\phi_n} \Ho_{n}(W \otimes_S N) \xrightarrow{} \cdots 
\end{align*}  
where $\delta^N$ is induced by $\tau$ and  $\psi\phi$ is equal to the map $\Tor^Q(M,N)\to \Tor^R(M,N)$ induced by the canonical projection $Q\to R$. 
\end{Theorem1}
We then apply this theorem to deduce results about vanishing of homology, and relate homological invariants via the change of ring $Q\to R$. Such results have been previously investigated by Bergh, Celikbas, and Jorgensen \cite{hzerodiv} using methods that rely on a change of rings spectral sequence. While we recover many of the results of \cite{hzerodiv} (in a weaker form, because of our additional assumption on characteristic), the assumption that $\chara({\sf k})=0$ leads to new results, and in particular we answer one of the questions raised in \cite{hzerodiv}, as noted after the statement of Theorem 2 below.


In general, for any  $f\in Q$ (not necessarily an exact zero divisor) and $M,N$ finitely generated modules over $R=Q/(f)$ such that $M\otimes_QN$ has finite length, we are concerned with understanding the asymptotic behavior of the size of the homology modules $\Tor_i^Q(M,N)$, and how this behavior changes under the change of ring $Q\to R$. We use $\ell(-)$ to denote length and we define a generalized Poincar\'e series  
$$\Po^Q_{M,N}(t) = \sum_{i=0}^{\infty} \ell(\Tor_i^Q(M,N)) t^i \,.$$
The classical Poincar\'e series of a module $M$, which is the generating series of the sequence of betti numbers of $M$, is $\Po_M^Q(t)=\Po_{M,\kk}^Q(t)$.
We use the invariants {\it length complexity}  and {\it length curvature}, denoted $\ell\cx_Q(M,N)$, respectively $\ell\cur_Q(M,N)$ (see  \cref{s:cx-curv} for the definitions) to measure asymptotic behavior of the coefficients of this series. When $N=\sf k$, these invariants coincide with the more classical definitions of complexity $\cx_Q M$ and curvature $\cur_Q M$ that are  discussed in \cite{Luchonotes}. The behavior of these invariants under the change of ring $Q\to R$ is well understood when $f$ is regular. More precisely, the exact  sequence \eqref{e:exact-regular} gives coefficient-wise inequalities 
\[
\Po^Q_{M,N}(t) \preccurlyeq \Po^R_{M,N}(t)\cdot (1+t)\qquad\text{and}\qquad \Po^R_{M,N}(t)\preccurlyeq \Po^Q_{M,N}(t)\cdot(1-t^2)^{-1}\,
\]
from which one obtains the (in)equalities below, see \cite[Proposition 4.2.5]{Luchonotes}:
\begin{align*}
\begin{split}
\ell\cx_Q(M,N) &\leq \ell\cx_R(M,N) \leq \ell\cx_Q(M,N) + 1 \,, \\
\ell\cur_R(M,N) = &\ell \cur_Q(M,N) \,\, \text{when} \,\,\ell\cur_Q(M,N)\ge 1\,.
\end{split}
\end{align*}

We provide a version of such results in the case when $f$ is an exact zero divisor. We recover two of the results of \cite{hzerodiv}, namely the inequalities providing upper bounds for $\ell\cx_Q(M,N)$ and $\Po^Q_{M,N}(t)$ stated below. We complete the picture with inequalities that provide upper bounds for $\Po^R_{M,N}(t)$ and $\ell\cx_R(M,N)$, and we also address the curvature invariant.   
\begin{Theorem2}[\cref{theorem}, \cref{corol}]
\label{intro2}
Let $(Q,\m, \sf k)$ be a local ring with $\chara({\sf k}) = 0$ and $(f,g)$  an exact pair of zero divisors in $Q$. We set $R = Q/(f)$ and $S = Q/(f,g)$. Let $M$, $N$ be finitely generated $R$-modules such that $\ell(M \otimes_Q N) < \infty$ and $gN=0$. 

The following coefficient-wise inequalities then hold: 
\begin{align}
\label{mainineq}
\Po^R_{M,N}(t) \preccurlyeq \dfrac{(1-t+t^2)\Po^Q_{M,N}(t)}{(1-t)} \quad\text{ and } \quad \Po^Q_{M,N}(t) \preccurlyeq \dfrac{\Po^R_{M,N}(t)}{(1-t)} \,.
\end{align}
Consequently, 
\begin{align*}
\begin{split}
\ell \cx_R(M,N)-1 \leq \ell &\cx_Q(M,N) \leq  \ell \cx_R(M,N) + 1 \qquad\text{and}\\
\ell \cur_Q(M,N) = \ell \cur_R(M,N) \quad &\text{when} \quad  \ell \cur_Q(M,N) \geq 1 \text{ and } \ell \cur_R(M,N) \geq 1 \,.
\end{split}
\end{align*}
\end{Theorem2} 
The authors mention in \cite{hzerodiv} that they do not know whether there exists an $R$-module $M$ with $\cx_Q(M)<\infty$ and $\cx_R(M)=\infty$ (in the context of the theorem, with $N=\sf k$), and ask whether an inequality such as $\cx_R(M)\le \cx_Q(M)$ holds. The inequality $\cx_R(M)-1\le \cx_Q(M)$ resulting from our theorem provides thus a partial answer.  We do not know whether the stronger inequality $\cx_R(M)\le \cx_Q(M)$ holds or if the characteristic assumption can be removed from our result.

While the Poincar\'e series $\Po_M^Q(t)$ are relatively well studied, with several results that imply such series are rational under appropriate assumptions,  there are only a few results (see \cite{gullik}, \cite{melissa}) showing that the generalized Poincar\'e series $\Po_{M,N}^Q(t)$ are rational, or provide formulas for these series. In \cref{s:vanishing} we make further contributions in this direction. We start by exploring the question of when equality holds in the second inequality of \eqref{mainineq}. From \cite{quasi} it is known that equality holds when $f\notin \m^2$ and $\m N=0$. In \cref{vanish} we consider the case when $f\notin \m^2$ and $\m^2 N=0$, and then we prove
the following result, in which $\Hi_M(t)$ denotes the Hilbert series of a $Q$-module  $M$.
\begin{Theorem3}[\cref{final}]
Let $(Q,\m, {\sf k})$ be a local ring with $\m^3=0$. Assume $\chara({\sf k}) = 0$, $(f,g)$ is an exact pair of zero divisors and set $R=Q/(f)$. Let $M,N$ be finitely generated $R$-modules such that $\m(M \otimes_R N) = 0$ and $gN=0$. 
Then 
\begin{equation*}
\label{PMN}
    \Po^Q_{M,N}(t) = \dfrac{\Hi_M(-t)\Hi_N(-t)}{\Hi_Q(-t)} \,.
\end{equation*}
\end{Theorem3}
Local rings $(Q,\m,\kk)$ with $\m^3=0$ that admit exact zero divisors previously appeared in work of Conca \cite{conca} and Avramov, Iyengar and \c{S}ega \cite{avramov}. When $\m^3=0$ and $(f,f)$ is an exact pair of zero divisors, \cref{ez:hil} shows that the element $f$ is a {\it Conca generator}, in the terminology of \cite{avramov}. The result of the theorem can be viewed as an extension of the fact proved in \cite{avramov} that, when $f$ is a Conca generator,  any $Q$-module with $fM=0$ is Koszul. 
 \section{dg algebras and Eisenbud operators}
 Throughout, $(Q,\m, \sf{k})$ denotes a local ring with $\m$ as its unique maximal ideal and ${\sf k}=Q/\m$.
 
In this section, we present some background on dg algebras, and then we extend the classical construction of an Eisenbud operator associated to a regular element $f\in Q$ in order  to allow for $f$ to be a zero divisor. We then give a concrete computation of the operator using the language of dg algebras.

We use $(X,\partial)$, or simply $X$ if $\partial$ is contextually clear,  to denote a sequence of $Q$-modules $X_i$ and $Q$-module homomorphisms $\partial_i$ as follows: 
$$ \cdots \to X_{n+1} \xrightarrow{\partial_{n+1}}  X_{n} \xrightarrow{\partial_{n}}  X_{n-1} \xrightarrow{\partial_{n-1}} \cdots$$
If $\partial_i \partial_{i+1} = 0$ for all $i$, then $(X,\partial)$ is a complex of $Q$-modules. We let $X^\#$ denote the underlying graded $Q$-module $\bigoplus_nX_n$,  and we let $|x|$ denote the degree of a homogeneous element $x \in X$, namely we set $|x| = n$ if $x \in X_n$.  

The {\it shift} of a complex $(X,\partial)$ of degree $i$, denoted $(\Sigma^iX,\Sigma^i\partial)$, is a complex with  
\[
(\Sigma^iX)_n\coloneq X_{n-i}\qquad\text{and}\qquad (\Sigma^i\partial)_n\coloneq (-1)^i\partial_{n-i}\,.
\]

\begin{bfchunk}{Differential graded (dg) algebra/module.} A {\it dg algebra} over $Q$ is a complex $(D,\partial)$ with $D_i = 0$ for all $i < 0$ such that, for $a,b,c \in D$, we have
\begin{itemize}
\item $1 \in D_0$ \text{(Unitary)};
\item $a(bc) = (ab)c$ \text{(Associative)};
\item $ab = (-1)^{|a||b|}ba$ \text{(Graded Commutative)}  \text{and}  $a^2 = 0$  \text{when }$|a|$ \text{ is odd}; 
\item $\partial(ab) = \partial(a)b +(-1)^{|a|}a\partial(b)$  \text{(Leibniz rule)}.   
\end{itemize}
A {\it dg module} $U$ over a dg algebra $D$ is a complex $(U,\partial)$ where $U^\#$ is a $D^\#$-module and for $u \in U$ and $a \in D$, we have
$$\partial(au) = \partial(a)u +(-1)^{|a|}a\partial(u) \, .$$
The dg module $U$ is said to be {\it semi-free} if $U^\#$ is free over $D^\#$. 
\end{bfchunk}

\begin{bfchunk}{Exterior and divided powers variables and Tate resolutions.}\label{sf_ext}
Let $(D, \partial)$ be a dg algebra over $Q$ and let $z$ be a cycle in $D$. Following \cite[Construction 2.1.7, 6.1.1]{Luchonotes}, we describe
a construction that extends  $D$ to a dg algebra $D\langle y\mid \partial(y)=z\rangle$ by adjoining a variable $y$ with $\partial(y) = z$. We also use the shortened notation $D\langle y\rangle$.

\textbf{Case 1: $|z|$ is even.} We define $Q \langle y \rangle$ to be a graded free $Q$-module with basis $\{1,y\}$ such that $|y| = |z| + 1$. We endow  $Q \langle y \rangle$ with a $Q$-algebra structure by setting $y^2 = 0$, and extending by linearity.  The dg algebra $D\langle y\rangle$ is defined as $D \langle y \rangle\coloneqq D \otimes_Q Q \langle y \rangle$, with differential 
$$\partial(d_0 + d_1y) = \partial(d_0) + \partial(d_1)y + (-1)^{|d_1|}d_1z \, .$$
In this case, $y$ is called an exterior variable.  

\textbf{Case 2: $|z|$ is odd.} We define $Q \langle y \rangle$ to be the graded free $Q$-module with basis $y^{(i)}$, where $y^{(0)}=1$, $|y|=|z|+1$ and $|y^{(i)}| = |y|i$ for all $i \geq 0$. We write $y^{(1)}=y$ and we make the convection that $y^{(i)}=0$ when $i<0$.  We endow $Q \langle y \rangle$ with a $Q$-algebra structure by setting
$y^{(i)}y^{(j)} = \binom{i+j}{i} y^{(i+j)} \, ,$
and extending by linearity. The dg algebra $D \langle y \rangle$ is defined as $D \langle y \rangle\coloneqq D \otimes_Q Q \langle y \rangle$, with differential
$$ \partial \left ( \sum_i d_iy^{(i)} \right ) = \sum_i \partial(d_i)y^{(i)} + \sum_i (-1)^{|d_i|}d_i z y^{(i-1)} \, .$$
In this case, $y$ is called a divided powers variable.

The main purpose of presenting this construction is to describe, following \cite[6.3.1]{Luchonotes}, a free resolution over $Q$ of a quotient $R\coloneqq Q/I$ that has a dg-algebra structure (in other words, a dg algebra resolution),  known as a {\it Tate resolution} of $R$ over $Q$. Namely, one successively adjoins finitely many variables in each degree such that the resulting dg algebra, that we shall denote $A$, is a free resolution of $R$ over $Q$. This resolution can be chosen to be minimal when $R=\kk$, but, in general, it may not be minimal, and it requires adjunction of infinitely many variables. We discuss below a situation when the Tate resolution is minimal and requires adjunction of only two variables.  
\end{bfchunk}

\begin{bfchunk}{Exact zero divisors.} 
\label{ezd}Let $f,g\in Q$ nonzero. We say that $(f,g)$ is an {\it exact pair of zero divisors} if $\ann_Q(f) = (g)$ and $\ann_Q(g)= (f)$. An element $f\in Q$ is said to be an {\it exact zero divisor} if there exists $g\in Q$ such that $(f,g)$ is an exact pair of zero divisors.   

Assume $(f,g)$ is an exact pair of zero divisors and set $R=Q/(f)$. Then a Tate resolution $A$ of $R$ over $Q$ is obtained by adjoining one exterior variable $y$ in degree $1$ and one divided power variable $t$ in degree $2$, namely
\begin{equation} \label{A}
A=Q\langle y,t\mid \partial(y)=f, \partial(t)=gy\rangle\, .
\end{equation}
For notation uniformity, we set $y_{2i} = t^{(i)}$ and $y_{2i + 1} = t^{(i)}y$ for all $i$. With this notation, $A$ is a free $Q$-algebra with basis $\{y_i\}_{i\ge 0}$, with $|y_i|=i$ for all $i\ge 0$, its differential is described by 
\[
\partial(y_{2n+1})=fy_{2n} \quad \text{ and } \quad \partial(y_{2n})=gy_{2n-1}
\]
for all $n\ge 0$, and the multiplication is given by 
\[
y_{2i+1}y_{2j+1}=0 \quad \text{ and } \quad  y_{2i}y_{2j+\varepsilon}=\binom{i+j}{i}y_{2(i+j)+\varepsilon}=y_{2j+\varepsilon}y_{2i} 
\]
for all $i\ge 0$, $j\ge 0$, where $\varepsilon\in \{0,1\}$. 

\end{bfchunk}

\begin{bfchunk}{Eisenbud operators.}\label{lift} Let $f\in Q$ and $R=Q/(f)$, and let $(F,\partial)$ be a complex of $R$-free modules. A {\it lifting} of the complex $F$ to $Q$ is a sequence $(\wt{F},\wt{\partial})$ with 
$$
\cdots \to \wt{F}_{n+1}\xrightarrow{\wt{\partial}_{n+1}} \wt{F}_n \xrightarrow{\wt{\partial}_{n}}\wt{F}_{n-1}\to \cdots 
$$
such that $\wt{F}_n$ is a free $Q$-module for all $n$ and 
\[
    \wt{F} \otimes_Q R = F \quad \text{ and } \quad  \wt{\partial} \otimes_Q R= \partial \,.
\]
Note that $(\wt{F},\wt{\partial})$ may not be a complex since $\wt{\partial}^2$ is not necessarily 0. However, since $\partial^2 = 0$, we get that $\wt{\partial}^2 = f\wt{\tau}$ for a map
$$
\wt{\tau}: \Sigma^{-1}\wt{F} \to \Sigma \wt{F}.  
$$  
We set  
$$S\coloneqq\frac{R}{(\ann_Q(f))R}=\frac{Q}{(f)+\ann_Q(f)}
$$
and identify $\wt{F}\otimes_QS=F\otimes_RS$. We define
\begin{equation} \label{tau_def}
\tau\coloneqq \widetilde\tau\otimes_QS\colon \Sigma^{-1}F\otimes_RS\to \Sigma F\otimes_RS \,.
\end{equation}
When $f \in Q$ is a regular element, $\ann_Q(f) = 0$ and hence $S = R$.  In this case, Eisenbud \cite[Proposition 1.1-1.3]{eisenbud} proves that $\tau\colon \Sigma^{-1}F\to \Sigma F$ is a chain map, is independent up to homotopy of the choice of the lifting $(\wt{F},\wt{\partial})$, and is natural, in the sense of \cref{ind_tau}(b) below.  With our extended definition above, we prove in \cref{ind_tau} that these properties hold without the assumption that $f$ is regular.  

Extending the terminology classically used in the case when $f$ is regular, we say that the map $\tau$ in \eqref{tau_def} is {\it the Eisenbud operator} associated to the data $(f, F, \wt{F})$.  A (different) extension of the notion of the Eisenbud operator to the case of exact zero divisors was explored by Windle in \cite{windle}. 
\end{bfchunk}

\begin{lemma} \label{ind_tau}
Let $(Q,\m,\sf{k})$ be a local ring, $f\in Q$ and set $R = Q/(f)$. Let $(F,\partial)$ be a complex of free $R$-modules and let $(\wt{F}, \wt{\partial})$ be a lifting of $F$ to  $Q$.  
If $\tau$ is the Eisenbud operator associated to the data $(f,F,\wt{F})$, then the following hold: 
\begin{enumerate}[{(a)}]
    \item $\tau$ is a chain map;
    \item Let $(F',\partial')$ be another complex of free $R$-modules and $(\wt{F'}, \wt{\partial}')$ be a lifting of $F'$ to $Q$. If $h:F \to F'$ is a chain map of degree 0, and $\tau'$ is the Eisenbud operator associated to the data $(f, F',\wt{F'})$, then $\tau'(h \otimes_Q S)$ is homotopic to $(h \otimes_Q S)\tau$; 
    \item The operator $\tau$ is independent of the choice of the lifting up to homotopy. 
\end{enumerate}
\end{lemma}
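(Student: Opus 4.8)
The plan is to reduce all three parts to one elementary observation: if $\phi$ is a $Q$-linear map between free $Q$-modules with $f\phi=0$, then every entry of $\phi$ lies in $\ann_Q(f)$, and hence $\phi\otimes_Q S=0$, because $S=Q/((f)+\ann_Q(f))$ kills both $f$ and $\ann_Q(f)$. This is the only point where anything beyond the defining relation $\wt\partial^2=f\wt\tau$ is used, and it is exactly what replaces the regularity hypothesis in Eisenbud's original argument, where $\ann_Q(f)=0$ and $S=R$. Everything else is formal manipulation of $\wt\partial^2=f\wt\tau$ over $Q$, producing expressions annihilated by $f$ that become genuine identities after applying $-\otimes_Q S$. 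For (a), compose $\wt\partial^2=f\wt\tau$ with $\wt\partial$ on the left and on the right to obtain $f\wt\partial\wt\tau=\wt\partial^3=f\wt\tau\wt\partial$, so $f(\wt\partial\wt\tau-\wt\tau\wt\partial)=0$; applying $-\otimes_Q S$ then gives $\partial\tau=\tau\partial$, so $\tau$ is a chain map (the signs coming from the shifts $\Sigma^{\pm1}$ in the definition of ``chain map $\Sigma^{-1}F\otimes_RS\to\Sigma F\otimes_RS$'' cancel).

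For (b), I would lift $h$ to a degree-$0$ homomorphism $\wt h\colon\wt F^\#\to\wt{F'}^\#$ of graded free $Q$-modules by lifting matrix entries; since $h$ is a chain map over $R$, reduction mod $f$ forces $\wt\partial'\wt h-\wt h\wt\partial=f\kappa$ for some degree $-1$ map $\kappa$. Then compute $(\wt\partial')^2\wt h$ in two ways: once as $f\wt\tau'\wt h$, and once by pushing $\wt\partial'$ past $\wt h$ twice using this relation together with $\wt\partial^2=f\wt\tau$ and $(\wt\partial')^2=f\wt\tau'$; the outcome is $f(\wt\tau'\wt h-\wt h\wt\tau-\wt\partial'\kappa-\kappa\wt\partial)=0$. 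Tensoring with $S$ yields $\tau'(h\otimes_QS)-(h\otimes_QS)\tau=\partial'(\kappa\otimes_QS)+(\kappa\otimes_QS)\partial$, so the two chain maps are homotopic via $\kappa\otimes_Q S$.

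For (c), I would first note that two liftings of $F$ can be taken to have the same underlying graded free module $\wt F^\#$ (routine, since both are free of the ranks prescribed by $F$), so that $\wt\partial-\wt\partial'=f\sigma$ for some degree $-1$ map $\sigma$; then $f(\wt\tau-\wt\tau')=\wt\partial^2-(\wt\partial')^2=f(\wt\partial\sigma+\sigma\wt\partial-f\sigma^2)$, and tensoring with $S$ (where the $f\sigma^2$ term vanishes) gives $\tau-\tau'=\partial(\sigma\otimes_QS)+(\sigma\otimes_QS)\partial$. Alternatively, (c) is the case $F=F'$, $h=\id$ of (b) once one observes that the Eisenbud operator depends on the lifting only through $\wt\partial$. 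I do not expect a genuine obstacle here: the entire argument is bookkeeping around the identity $\wt\partial^2=f\wt\tau$ and the choice of $S$, and the only point requiring care is tracking the signs introduced by the shifts $\Sigma^{\pm1}$ in the definitions of chain map and chain homotopy, which cancel throughout and do not affect the conclusions.
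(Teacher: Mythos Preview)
Your proposal is correct and follows essentially the same route as the paper: in each part one produces a map $\phi$ between free $Q$-modules with $f\phi=0$, hence $\phi\otimes_QS=0$, and the computations leading to these identities (via $\wt\partial^2=f\wt\tau$, the lifted chain-map relation $\wt\partial'\wt h-\wt h\wt\partial=f\kappa$, and the reduction of (c) to (b) with $h=\id$) match the paper's line for line. Your additional direct argument for (c) via $\wt\partial-\wt\partial'=f\sigma$ is a harmless variant.
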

\begin{proof}
(a) We need to show $\tau(\partial\otimes_QS)=(\partial\otimes_QS)\tau$, or, in other words 
$$
\left(\wt{\tau}\wt{\partial}- \wt{\partial}\wt{\tau}\right)\otimes_QS= 0.$$
To establish this conclusion, we show $\Img\left(\wt{\tau}\wt{\partial}- \wt{\partial}\wt{\tau}\right)\subseteq \ann_Q(f)\wt{F}$. 

Indeed, since $\wt{\partial}^2 = f\wt{\tau}$, we have
$$f\left(\wt{\tau}\wt{\partial} - \wt{\partial}\wt{\tau}\right) = (f\wt{\tau})\wt{\partial} -\wt{\partial}(f\wt{\tau}) = \wt{\partial}^2\wt{\partial} - \wt{\partial}\wt{\partial}^2 = \wt{\partial}^3-\wt{\partial}^3 = 0 \, .$$

(b) Let $h: F \to F'$ be a chain map. We set $\wt{h}: \wt{F} \to \wt{F'}$ such that $\wt{h} \otimes_Q R = h$ and thus we have 
\begin{equation}\label{theta}
    \wt{\partial'} \wt{h} - \wt{h}\wt{\partial} = f\alpha\, ,
\end{equation} 
for some $\alpha: \wt{F} \to \Sigma\wt{F}'$. To show that  $\tau'(h \otimes_R S)$ is homotopic to $(h \otimes_R S) \tau$, we show 
$$\tau'  (h \otimes_R S) - (h\otimes_R S)  \tau = (\partial' \otimes_R S)(\alpha \otimes_R S) + (\alpha \otimes_R S)  (\partial \otimes_R S)\,.$$
We reach this conclusion by showing
\begin{equation}
\label{ftimes}
f\left(\ti{h}\ti{\tau} - \ti{\tau'}\ti{h} +\ti{\partial'}\alpha + \alpha\ti{\partial}\right)= 0 \, .
\end{equation}
Indeed, by definition of $\tau$ and $\tau'$ we have $f\wt{\tau}=\wt{\partial}^2$ and $f\wt{\tau'}=\wt{\partial'}^2$, hence 
\begin{align*}
    f\ti{h}\ti{\tau} = \ti{h}(f\ti{\tau}) & = \ti{h} \ti{\partial}^2 = (\ti{h} \ti{\partial})\ti{\partial} \\
    & = \left(\ti{\partial'}\ti{h} - f\alpha\right)\ti{\partial} && \text{from $\eqref{theta}$}\\
    & = \ti{\partial'}\ti{h}\ti{\partial} - f\alpha\ti{\partial} \\
    & = \ti{\partial'}\left(\ti{\partial'}\ti{h} - f\alpha\right) - f\alpha\ti{\partial} && \text{from $\eqref{theta}$}\\
    & = \ti{\partial'}^2\ti{h} - \ti{\partial'}f\alpha - f\alpha\ti{\partial} \\
    &= f \ti{\tau'} \ti{h} - f\ti{\partial'}\alpha - f\alpha\ti{\partial}\,.
\end{align*}
This establishes \eqref{ftimes}.

(c) The conclusion follows by taking $F' = F$ and $h = \text{id}_F$ in (b). 
\end{proof}

\begin{bfchunk}{Choice of lifting in the construction of the Eisenbud operator.}\label{desc} Let $f\in Q$ and $R=Q/(f)$, and let $A$ denote a Tate resolution of $R$ over $Q$ obtained by adjoining variables  as in \ref{sf_ext}.  We let $\{y_\lambda\}_{\lambda\in \Lambda} $ denote a basis of $A^\#$ over $Q$, and set $$\Lambda_i\coloneqq\{\lambda\in \Lambda\colon |y_\lambda|=i\}$$   for each $i\ge 0$. Since $(f)$ is principal, we may assume that only one variable in degree $1$ is adjoined and we let $y$ denote the variable with $|y|=1$ and $\partial(y)=f$. Further, we see that \begin{equation}
\label{Lambda2}
\partial(y_\lambda)\in \ann_Q(f)y \qquad\text{for all } y_\lambda\in \Lambda_2\,.
\end{equation}
When $f$ is a non-zero divisor, we take $\Lambda_i=\emptyset$ for $i>1$, and when $f$ is an exact zero divisor, recall from \ref{ezd} that we may take $\Lambda_i=\{i\}$ for all $i\ge 0$. 

Let $M$ be a finite $Q$-module with $fM = 0$. By \cite[Proposition 2.2.7]{Luchonotes}, there exists a free resolution $(U, \partial)$ of $M$ over $Q$ such that $U$ has a semi-free dg module structure over $A$. 
Let $\{e_\delta\}_{\delta \in \Delta}$ denote a basis of $U^\#$ over $A^\#$, and for each $n\in \mathbb Z$ let $V_n$ denote the free $Q$-module with basis $\{e_\delta\ \mid \delta \in \Delta , |e_\delta| = n\}$.  We write then 
\begin{equation}
\label{decomposition}
U_n = V_n\oplus yV_{n-1} \oplus L_n\qquad\text{where }\quad L_n=\bigoplus_{i=2}^n \bigoplus_{\lambda\in \Lambda_i} y_{\lambda} V_{n-i} \, .
\end{equation}
Using the Leibnitz rule and \eqref{Lambda2}, observe:
\begin{equation}
\label{L}
\partial_n(L_n)\subseteq \ann_Q(f)yV_{n-2}+L_{n-1}\,.
\end{equation}
In particular, it follows that $$
(\partial_n\otimes_QS)(L_n\otimes_QS)\subseteq L_{n-1}\otimes_QS\subseteq U_{n-1}\otimes_QS
$$
and we denote  $(L\otimes_QS, \partial\otimes_QS)$ the subcomplex of $U\otimes_QS$ defined in this way. 

 By \cite[Lemma 1.2]{siree},
the complex $(U',\partial')$, with
\begin{equation*}
U_n' = V_n \otimes_Q R \quad\text{ and }\quad 
\partial'_n = \pi_{n-1}\restr{\partial_n}{V_n} \otimes_Q R \, 
\end{equation*}
is a free resolution of $M$ over $R$, where $\pi_{n-1} : U_{n-1} \to V_{n-1}$ is the canonical projection map. We denote by $(V, \partial^V)$ the sequence of free $Q$-modules with $V_n$ as defined above and $$\partial^V_n \coloneqq\pi_{n-1}\restr{\partial_n}{V_n}\colon V_n\to V_{n-1}\,.
$$
Then $(V, \partial^V)$ is a lifting of $U'$ to $Q$. From now on, we will assume that the Eisenbud operator defined in \eqref{tau_def} is associated to the data $(f,U',V)$, and thus we have maps 
\begin{equation*}
    \wt{\tau}\colon \Sigma^{-1}V \to \Sigma V\quad\text{ and } \quad 
    \tau\colon \Sigma^{-1}V \otimes_Q S \to  \Sigma V \otimes_Q S 
\end{equation*}
where $V \otimes_Q S = U' \otimes_R S$. 
\end{bfchunk}
\begin{lemma} \label{taudesc}
    Let $n\in \mathbb Z$ and $x \in V_{n+1}$, and use the decomposition \eqref{decomposition} to write
    $$\partial_{n+1}(x) = x_{n}+yx_{n-1}+\overline{x} \qquad \text{ with } \quad x_i\in V_i,\, \overline{x}\in L_{n}\,.
$$
Then the Eisenbud operator $\tau_n\colon V_{n+1}\otimes_QS\to V_{n-1}\otimes_Q S$ satisfies 
    $$\tau_n(x \otimes 1_S) = -x_{n-1} \otimes 1_S \,.$$   
\end{lemma}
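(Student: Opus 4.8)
The plan is to compute the composite $\partial^V_n\partial^V_{n+1}\colon V_{n+1}\to V_{n-1}$ explicitly, using the semi-free dg module structure of $U$ over $A$ set up in \ref{desc}, and then recover $\wt\tau$ from its defining relation $(\partial^V)^2=f\wt\tau$ and tensor down to $S$. To begin, since $\partial^V_{n+1}=\pi_n\restr{\partial_{n+1}}{V_{n+1}}$ and $\pi_n$ is the projection of $U_n=V_n\oplus yV_{n-1}\oplus L_n$ onto its first summand, we have $\partial^V_{n+1}(x)=x_n$, and likewise $\partial^V_n(x_n)=\pi_{n-1}(\partial_n(x_n))$. So everything reduces to identifying the $V_{n-1}$-component of $\partial_n(x_n)\in U_{n-1}=V_{n-1}\oplus yV_{n-2}\oplus L_{n-1}$ in the decomposition \eqref{decomposition}.

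Since $\partial^2=0$ on $U$, applying $\partial_n$ to $\partial_{n+1}(x)=x_n+yx_{n-1}+\overline{x}$ gives $\partial_n(x_n)=-\partial_n(yx_{n-1})-\partial_n(\overline{x})$. The Leibniz rule together with $\partial(y)=f$ yields $\partial_n(yx_{n-1})=fx_{n-1}-y\partial_{n-1}(x_{n-1})$. Decomposing $\partial_{n-1}(x_{n-1})\in U_{n-2}$ via \eqref{decomposition} and using $y^2=0$ together with $yL_{n-2}\subseteq L_{n-1}$ — immediate in the exact zero divisor case from the multiplication table for $A$ in \ref{ezd}, and true in general since $y\cdot y_\lambda\in A_{|y_\lambda|+1}$ with $|y_\lambda|+1\ge 3$ — one sees $y\partial_{n-1}(x_{n-1})\in yV_{n-2}\oplus L_{n-1}$, so it contributes nothing to the $V_{n-1}$-component. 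Likewise \eqref{L} gives $\partial_n(\overline{x})\in \ann_Q(f)yV_{n-2}+L_{n-1}$, again contributing nothing. Since $fx_{n-1}\in V_{n-1}$, the $V_{n-1}$-component of $\partial_n(x_n)$ is exactly $-fx_{n-1}$; that is, $\partial^V_n\partial^V_{n+1}(x)=-fx_{n-1}$ for all $x\in V_{n+1}$.

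Finally, recall from \ref{lift} that $\wt\tau$ is any graded map satisfying $(\partial^V)^2=f\wt\tau$; the previous step gives $f(\wt\tau_n(x)+x_{n-1})=0$, so $\wt\tau_n(x)+x_{n-1}\in\ann_Q(f)V_{n-1}$. Since $\ann_Q(f)$ maps to zero in $S$, tensoring with $S$ over $Q$ yields $\tau_n(x\otimes 1_S)=-x_{n-1}\otimes 1_S$. (This also explains why the formula is stated over $S$ rather than $Q$: over $Q$ the lift $\wt\tau$ is only determined modulo maps with image in $\ann_Q(f)V$.)

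The step requiring the most care is the bookkeeping in the second paragraph — verifying that both $y\partial_{n-1}(x_{n-1})$ and $\partial_n(\overline{x})$ land inside $yV_{n-2}\oplus L_{n-1}$, so that the $V_{n-1}$-component of $\partial_n(x_n)$ is genuinely untouched by them. The inclusion $yL_{n-2}\subseteq L_{n-1}$ and the estimate \eqref{L} are the crucial inputs; there is no conceptual obstacle, only the need to be precise about the direct-sum decomposition.
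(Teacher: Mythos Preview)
Your proof is correct and follows essentially the same approach as the paper's: both use $\partial^2=0$ and the decomposition \eqref{decomposition} together with \eqref{L} to identify the $V_{n-1}$-component of $\partial_n(x_n)$ as $-fx_{n-1}$, and then read off $\tau$. Your treatment is in fact slightly more careful than the paper's in one respect: you note that $\wt\tau$ is determined by $f\wt\tau=(\partial^V)^2$ only modulo maps landing in $\ann_Q(f)V$, whereas the paper simply asserts $\wt\tau_n(x)=-x_{n-1}$; either way the conclusion over $S$ is the same.
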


\begin{proof}
Starting with 
\begin{align*}
0=\partial^2(x) = \partial (x_{n}+yx_{n-1}+\overline{x})\,,
\end{align*}
we compute $\partial(x_{n})$ as follows: 
\begin{equation*}
     \partial(x_{n}) =  -fx_{n-1} + y\partial(x_{n-1}) - \partial (\overline{x})\,.\qquad 
     \end{equation*}
Using \eqref{L} for the second equation, observe
$$
-fx_{n-1}\in V_{n-1}\qquad \text{ and }\qquad  y\partial(x_{n-1}) - \partial (\overline{x})\in yV_{n-2}+L_{n-1}\,.   
$$
The definition of $\partial^V$ gives  $\partial^V_{n}(x_{n}) = -fx_{n-1}$. Then, using the definitions of $\wt{\tau}$ and $\tau$,  we have
\begin{equation*}
    \wt{\tau}_n(x) = -x_{n-1} \quad\text{ and }\quad 
    \tau_n(x \otimes 1_S) =-x_{n-1} \otimes 1_S  \,. \qedhere
\end{equation*} 
\end{proof}

\section{The Mapping cone of the Eisenbud Operator} \label{les}
In this section we retain the notation of the previous section and we construct the mapping cone of the Eisenbud operator $\tau\colon \Sigma^{-1}V \otimes_Q S \to \Sigma V \otimes_Q S $ and describe it using the language of dg algebras. When $f$ is a non-zero divisor, we explain that the long exact sequence arising from the mapping cone is exactly the sequence \eqref{e:exact-regular} in the introduction. When $f$ is an exact zero divisor and  $\kk$ has characteristic zero, we describe in \cref{mth} two exact complexes that connect homology over $Q$ to homology over $R$. 

\begin{bfchunk}{The mapping cone of $\tau$.}\label{mc}
Consider the operator $\tau$ as described in \ref{desc}. 
The {\it mapping cone of} $\tau$ is the complex $(W,\partial^W)$ with
\begin{align*}
    W_{n} &= (V_{n-1} \oplus V_{n}) \otimes_Q S = (V_{n-1} \oplus_QS)\oplus( V_{n}\otimes_Q S)\\
     \partial^W_{n} &: (V_{n-1} \oplus V_{n}) \otimes_Q S \to (V_{n-2} \oplus V_{n-1}) \otimes_Q S  \\
    \partial^W_{n} &= \begin{pmatrix}
        -\partial^V_{n-1}  & -\tau_{n-1} \\
        0 & \partial_n^V
    \end{pmatrix}\otimes_QS \,.
\end{align*}
Let $n\in \mathbb Z$,  $x\in V_{n-1}$ and $a\in V_n$. As in  \cref{taudesc},  we write
\begin{align*}
\partial_{n-1}(x) &= x_{n-2} + yx_{n-3} + \overline{x} \\ \partial_n(a) &= a_{n-1} + ya_{n-2} + \overline{a}
\end{align*} 
where $x_{i}, a_i\in V_i$, $\overline{x}\in L_{n-2}$ and $\overline{a}\in L_{n-1}$. Therefore, using \cref{taudesc} for $\tau_{n-1}$, we can describe the differential $\partial^W$ of the mapping cone by $$\partial_{n}^W((x,a) \otimes 1_S)  = (a_{n-2}-x_{n-2}, a_{n-1}) \otimes 1_S \in (V_{n-2} \oplus V_{n-1}) \otimes_Q S.$$

As usual, the mapping cone construction gives rise to a short exact sequence 
\begin{equation}
\label{short}
0\to \Sigma V\otimes_QS\xrightarrow{\zeta} W\xrightarrow{\gamma} V\otimes_QS\to 0 
\end{equation}
of complexes of free $S$-modules.

Let $N$ be an $S$-module. We have an isomorphism of complexes of $S$-modules 
$$(V\otimes_Q S) \otimes_S N \cong (V \otimes_Q R) \otimes_R N =U'\otimes_RN$$
and we identify $\Ho_n(U'\otimes_RN)=\Tor_n^R(M,N). $ Tensoring the short exact sequence above with $N$ yields a short exact sequence that induces the long exact sequence in homology 
\begin{equation}
\label{long}
\dots \to \Tor_{n-1}^R(M,N)\to \Ho_{n}(W\otimes_S N)\to \Tor_{n}^R(M,N)\xrightarrow{\delta_n} \Tor_{n-2}^R(M,N)\to \dots
\end{equation} 
where $\delta_n$ is induced by $\tau_n$. 
\end{bfchunk}

We now proceed to closer investigate the complex $W$. 
\begin{bfchunk}{The map $\omega$.}\label{omega-def} Recall from \eqref{decomposition} that $U_n=V_n\oplus yV_{n-1} + L_n$. For each $n$, we define an $S$-module homomorphism 
 $\omega_n\colon U_n\otimes_Q S\to W_n
 $
 by setting 
 $$ \omega_n(u\otimes 1_S) \coloneqq (x,a)\otimes 1_S ,$$
 where $u\in U_n$ has the following decomposition, per \eqref{decomposition}:
\begin{equation}
\label{u}
u=a+yx+l \qquad\text{ with } a \in V_n\,, x \in V_{n-1}, \text{ and } l \in L_{n} \,.
\end{equation}

\end{bfchunk}

\begin{lemma}\label{omega} 
 The map $\omega\colon U\otimes_Q S\to W$ defined in \ref{omega-def} is a homomorphism of complexes of $S$-modules, and $\Ker(\omega)=L\otimes_QS$. Therefore, 
$$
W\cong \frac{U\otimes_QS}{L\otimes_QS}\,.
$$
If $f$ is a non-zero divisor, then $\omega$ is an isomorphism and $W\cong U\otimes_QS$. 
\end{lemma}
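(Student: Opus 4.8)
The plan is to verify directly that $\omega$ is a morphism of complexes, to read off its kernel from the direct sum decomposition \eqref{decomposition}, and then to specialize to the non-zero divisor case. Note first that $\omega_n$ is the $S$-linear map obtained by applying $-\otimes_QS$ to the $Q$-linear projection $U_n=V_n\oplus yV_{n-1}\oplus L_n\to V_{n-1}\oplus V_n$, $a+yx+l\mapsto(x,a)$; in particular $\omega_n$ is well defined.

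The main step is to check $\partial^W_n\circ\omega_n=\omega_{n-1}\circ(\partial_n\otimes_QS)$ for every $n$. Fix $u\in U_n$ decomposed as in \eqref{u}, so $u=a+yx+l$ with $a\in V_n$, $x\in V_{n-1}$, $l\in L_n$, and write $\partial_n(a)=a_{n-1}+ya_{n-2}+\overline a$ and $\partial_{n-1}(x)=x_{n-2}+yx_{n-3}+\overline x$ as in \cref{taudesc}, where $a_i,x_i\in V_i$, $\overline a\in L_{n-1}$, $\overline x\in L_{n-2}$. Using the Leibniz rule for the dg $A$-module $U$, the relation $\partial(y)=f$, and $y^2=0$ (valid since $|y|=1$ is odd), one computes in $U_{n-1}$
$$\partial_n(u)=\bigl(a_{n-1}+fx\bigr)+y\bigl(a_{n-2}-x_{n-2}\bigr)+\bigl(\overline a-y\overline x+\partial_n(l)\bigr)\,.$$
Apply $-\otimes_QS$: the term $fx\in V_{n-1}$ becomes $0$ because $f$ maps to $0$ in $S$; the element $y\overline x$ lies in $L_{n-1}$ since $yL_{n-2}\subseteq L_{n-1}$ by the multiplicative structure of $A$ and the definition of $L$; and $\partial_n(l)\otimes 1_S$ lies in $L_{n-1}\otimes_QS$ by \eqref{L} together with the fact that $\ann_Q(f)$ annihilates every $S$-module. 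Hence, with respect to the decomposition of $U_{n-1}\otimes_QS$ induced by \eqref{decomposition}, the element $\partial_n(u)\otimes 1_S$ has $V_{n-1}$-component $a_{n-1}\otimes 1_S$, $yV_{n-2}$-component $y(a_{n-2}-x_{n-2})\otimes 1_S$, and remainder in $L_{n-1}\otimes_QS=\Ker\omega_{n-1}$; therefore $\omega_{n-1}(\partial_n(u)\otimes 1_S)=(a_{n-2}-x_{n-2},\,a_{n-1})\otimes 1_S$. On the other hand $\omega_n(u\otimes 1_S)=(x,a)\otimes 1_S$, and the explicit formula for $\partial^W$ obtained in \ref{mc} (which evaluates $\tau_{n-1}$ via \cref{taudesc}) gives $\partial^W_n((x,a)\otimes 1_S)=(a_{n-2}-x_{n-2},\,a_{n-1})\otimes 1_S$. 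The two expressions coincide, so $\omega$ is a chain map.

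Since $U_n=V_n\oplus yV_{n-1}\oplus L_n$ is a direct sum of free $Q$-modules, $U_n\otimes_QS$ decomposes accordingly, and $\omega_n$ is, up to reordering coordinates, the projection onto $(V_n\oplus yV_{n-1})\otimes_QS$ followed by the identification $yV_{n-1}\otimes_QS\cong V_{n-1}\otimes_QS$; hence $\omega_n$ is surjective with $\Ker\omega_n=L_n\otimes_QS$. As $L\otimes_QS$ is a subcomplex of $U\otimes_QS$ (noted in \ref{desc}) and $\omega$ is a chain map, $\omega$ induces an isomorphism of complexes $(U\otimes_QS)/(L\otimes_QS)\xrightarrow{\ \sim\ }W$. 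Finally, if $f$ is a non-zero divisor then $\Lambda_i=\emptyset$ for $i>1$ (as in \ref{desc}), so $L_n=0$ for all $n$; then $\Ker\omega=0$, and since $\omega$ is surjective it is an isomorphism, giving $W\cong U\otimes_QS$.

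The only genuinely delicate point is the chain-map computation: one must track carefully where the three error terms $fx$, $y\overline x$, and $\partial_n(l)$ land after applying $-\otimes_QS$. This is exactly where the defining features of $S$ — that $f$ and $\ann_Q(f)$ both become zero in $S$ — together with the relation $y^2=0$ are used; the remaining steps are routine bookkeeping with \eqref{decomposition}.
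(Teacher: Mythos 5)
Your proposal is correct and follows essentially the same route as the paper: a direct Leibniz-rule computation of $\partial_n(u)$ for $u=a+yx+l$, observing that the error terms $fx$, $y\overline{x}$, and the $\ann_Q(f)yV_{n-2}$-part of $\partial_n(l)$ either vanish in $S$ or land in $L_{n-1}\otimes_QS=\Ker\omega_{n-1}$, followed by reading off the kernel from \eqref{decomposition} and noting $L=0$ when $f$ is regular. Your explicit justification that $yL_{n-2}\subseteq L_{n-1}$ is a point the paper leaves implicit, but the argument is the same.
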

\begin{proof}
Let $n\ge 0$. We decompose $u\in U_n$ as in \eqref{u}. 
Recall that $\omega_n(u\otimes 1_S) =(x,a)\otimes 1_S$. 

To prove that $\omega$ is a chain map, we need to show
$$ \omega_{n-1}(\partial_n \otimes_Q S) - \partial^W_{n} \omega_n = 0$$
for all $n$. Indeed, with the notation introduced in \ref{mc}, we have
$$\partial^W_{n} \omega_n(u \otimes 1_S)= \partial^W_{n}((x,a) \otimes 1_S) = (a_{n-2}-x_{n-2}, a_{n-1}) \otimes 1_S \,.$$
Moreover, we use \eqref{L} to write $l=l_{n-2}+\overline{l}$, where $l_{n-2}\in \ann_Q(f)yV_{n-2}$ and $\overline{l} \in L_{n-1}$, and we use the Leibnitz rule towards the computation below: 
\begin{align*}
\omega_{n-1}&(\partial_n \otimes_Q S)(u \otimes 1_S) \\ & = \omega_{n-1}\left((a_{n-1}+ya_{n-2} +\overline{a} + fx -y(x_{n-2} + yx_{n-3} + \overline{x}) + l_{n-2} + \overline{l}) \otimes 1_S\right) \\
& = \omega_{n-1}\left((a_{n-1} + y(a_{n-2} -x_{n-2}) + \overline{a} -y\overline{x} + \overline{l}) \otimes 1_S\right)\\
&= (a_{n-2}-x_{n-2}, a_{n-1}) \otimes 1_S \,.
\end{align*}
We showed thus $\omega$ is a chain map. It is clearly surjective, and $\Ker(\omega_n) = L_n \otimes_Q S$ for all $n$. This gives the desired isomorphism of complexes. 

When $f \in Q$ is a non-zero divisor, from the discussion in \ref{desc} we have $L_n = 0$ for all $n$, giving that $\omega$ is an isomorphism.
\end{proof}

The following proposition provides an alternate proof for \cite[Proposition 3.2.2]{Luchonotes}, towards explaining the long exact sequence \eqref{e:exact-regular} mentioned in the introduction. 

\begin{proposition} \label{reg_long}
Let $(Q, \m, \sf{k})$ be a local ring, $f \in Q$ a non-zero  divisor and set $R = Q/(f)$. If $M$, $N$ are finitely generated $R$-modules, then there is a long exact sequence 
\begin{align*}
    \cdots \xrightarrow{}& \Tor_{n-1}^R(M,N) \xrightarrow{}  \Tor_{n}^Q(M,N) \xrightarrow{\varphi_n} \Tor_{n}^R(M,N) \xrightarrow{\delta_n} \Tor_{n-2}^R(M,N) \xrightarrow{} \cdots 
\end{align*} 
where $\delta$ is induced by the Eisenbud operator associated with $f$, and $\varphi$ is induced by the canonical projection $Q\to R$. 
\end{proposition}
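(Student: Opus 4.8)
The plan is to read off the sequence directly from the mapping cone construction of \cref{mc}, using \cref{omega} to identify its homology. Since $f$ is a non-zero divisor we have $S=R$, and \cref{omega} provides an isomorphism of complexes of $R$-modules $\omega\colon U\otimes_QR\xrightarrow{\ \cong\ }W$, where $U$ is the $Q$-free resolution of $M$ (with semi-free dg module structure over the Tate resolution $A$) fixed in \cref{desc}; moreover, in this notation, $V\otimes_QS=U'$, the $R$-free resolution of $M$ described there.

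First I would tensor the short exact sequence \eqref{short} with $N$ over $S=R$; as all three complexes are degreewise free this stays exact, and it yields the long exact sequence \eqref{long},
\[
\cdots\to \Tor_{n-1}^R(M,N)\to \Ho_n(W\otimes_RN)\xrightarrow{\ \varphi_n\ }\Tor_n^R(M,N)\xrightarrow{\ \delta_n\ }\Tor_{n-2}^R(M,N)\to\cdots,
\]
with $\delta$ induced by $\tau$ as recorded in \cref{mc}. Next I would identify $\Ho_n(W\otimes_RN)$ with $\Tor_n^Q(M,N)$: tensoring $\omega$ with $N$ gives an isomorphism of complexes $U\otimes_QN\cong (U\otimes_QR)\otimes_RN\xrightarrow{\ \cong\ }W\otimes_RN$, and since $U$ is a $Q$-free resolution of $M$, the homology of $U\otimes_QN$ is $\Tor^Q_\bullet(M,N)$. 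Substituting this into the displayed sequence produces exactly the sequence in the statement, so only the identification of $\varphi$ remains.

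The last, and only substantive, step is to check that $\varphi_n$ is the change-of-rings homomorphism. By construction $\varphi_n$ is induced by $\gamma\otimes_RN$, where $\gamma\colon W\to V\otimes_QS=U'$ is the projection in \eqref{short}. Writing $u=a+yx+l$ as in \cref{omega-def}, the definitions give $\gamma_n\omega_n(u\otimes 1_S)=a\otimes 1_S=\pi_n(u)\otimes 1_S$, where $\pi_n\colon U_n\to V_n$ is the canonical projection attached to the decomposition \eqref{decomposition}; thus $\gamma\circ\omega$ is the $R$-linear extension of $\pi$. By \cite[Lemma 1.2]{siree} (or by a direct computation with \eqref{decomposition} and the Leibniz rule, using that $\ann_Q(f)=0$), $\pi\otimes_QR\colon U\otimes_QR\to U'$ is a morphism of complexes lifting $\id_M$, hence a quasi-isomorphism between the $Q$-free resolution $U\otimes_QR$ and the $R$-free resolution $U'$. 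Therefore the map it induces on homology after $-\otimes_RN$ is precisely the map $\Tor_n^Q(M,N)\to\Tor_n^R(M,N)$ coming from $Q\to R$, and precomposing with the isomorphism $\omega\otimes_RN$ identifies $\varphi_n$ with it. The main obstacle, then, is not exactness — which is automatic once $W\cong U\otimes_QR$ — but this compatibility: that under $\omega$ the mapping-cone projection $\gamma$ corresponds to the standard comparison map between the two resolutions, so that $\varphi_n$ is genuinely the canonical change-of-rings map rather than merely some homomorphism $\Tor^Q_n\to\Tor^R_n$.
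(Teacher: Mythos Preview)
Your argument is correct and follows the paper's proof essentially verbatim: both obtain the sequence from \eqref{long} via the isomorphism $W\cong U\otimes_QR$ of \cref{omega}, and both identify $\varphi$ by noting that $\gamma\circ\omega=\pi\otimes_QR$ is a chain map lifting $\id_M$. One small slip worth correcting: $\pi\otimes_QR\colon U\otimes_QR\to U'$ is generally \emph{not} a quasi-isomorphism (and $U\otimes_QR$ is not a resolution of $M$), but this is irrelevant to the proof—lifting $\id_M$ is all that is required to identify the induced map with the canonical change-of-rings homomorphism.
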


\begin{proof} Under the given hypothesis on $f$, note that the ring $S$ that we worked with so far is equal to $R$. 
The exactness of the sequence in the statement is a direct consequence of the isomorphism $W\cong U\otimes_QS$ of \cref{omega} and the exact sequence \eqref{long}, using the identification $\Tor_n^Q(M,N)=\Ho_n(U\otimes_QN)$. 

The map $\varphi_n$ in the statement is a composition 
$$
\Ho_n(U\otimes_QN)\to \Ho_n(W\otimes_QN)\to \Ho_n(V\otimes_QN)
$$
where the first map is induced by the isomorphism $\omega$ and the second by the projection $\gamma$ in \eqref{short}, and hence this composition can be viewed as being induced by the canonical projection $\pi\otimes_QR\colon U\otimes_QR\to V\otimes_QR=U'$. Observe that $\pi\otimes_QR$ is a homomorphism of complexes that lifts the identity map on $M$, and thus, upon tensoring with $N$, it describes the canonical map induced in homology by the projection $Q\to R$. 
\end{proof}

We now turn our attention to the situation when $f$ is an exact zero divisor.

\begin{lemma} \label{2sesc}
Let $(Q,\m,\sf k)$ be a local ring with $\chara({\sf k}) = 0$, $(f,g)$ an exact pair of zero divisors, and $S=Q/(f,g)$. With the notation introduced in \ref{mc}, 
\begin{equation}\label{sesq}
    \begin{tikzcd}
       0 \arrow{r} & \Sigma^{2}U \otimes_Q S \arrow[r,"\cdot y_2"] & U \otimes_Q S \arrow[r,"\omega"] & W \arrow{r} & 0
    \end{tikzcd}
\end{equation}
is a short exact sequence of complexes of $S$-modules.  
\end{lemma}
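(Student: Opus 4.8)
The plan is to verify exactness in each homological degree, using the explicit structure of the dg algebra $A$ recorded in \ref{ezd} together with the kernel computation of \cref{omega}. First I would check that multiplication by $y_2$ does define a chain map of $S$-complexes $\Sigma^{2}U\otimes_Q S\to U\otimes_Q S$. The $A$-module structure on $U$ makes $\cdot y_2$ a $Q$-linear map $U_{n-2}\to U_n$ for every $n$, so after tensoring with $S$ it is $S$-linear; and since $|y_2|=2$ is even, the Leibniz rule gives $\partial(y_2u)=\partial(y_2)u+y_2\partial(u)=gyu+y_2\partial(u)$, where the term $gyu$ becomes zero in $U\otimes_Q S$ because $g\in(f,g)$. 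Hence $\cdot y_2$ commutes with the differential once we pass to $S$ (with no sign, as $|y_2|$ is even), which is precisely the point of tensoring with $S$ here.

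Next I would analyze $\cdot y_2\colon U_{n-2}\to U_n$ at the level of $Q$-modules. Writing $U_m=\bigoplus_{i=0}^m y_iV_{m-i}$ as in \eqref{decomposition} and reading off the multiplication table of \ref{ezd}, one finds $y_2y_i=c_i\,y_{i+2}$ with $c_i=\lfloor i/2\rfloor+1$ a positive integer. Since $\chara({\sf k})=0$, every positive integer is a unit in the local ring $Q$, so each $c_i$ is invertible; therefore $\cdot y_2$ carries the free summand $y_iV_{n-2-i}$ isomorphically onto $y_{i+2}V_{n-(i+2)}$. Letting $i$ run over $0,\dots,n-2$ shows that $\cdot y_2\colon U_{n-2}\to U_n$ is a split injection of $Q$-modules whose image is exactly $\bigoplus_{j=2}^n y_jV_{n-j}=L_n$.

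To conclude I would feed this into \cref{omega}. Being a split injection of $Q$-modules onto the direct summand $L_n\subseteq U_n$, the map $\cdot y_2$ remains injective after applying $-\otimes_Q S$, with image the submodule $L_n\otimes_Q S$ of $U_n\otimes_Q S$. On the other hand, \cref{omega} says that $\omega_n$ is surjective with kernel precisely this submodule $L_n\otimes_Q S$. Thus in every degree the displayed sequence is exact at $\Sigma^{2}U\otimes_Q S$, at $U\otimes_Q S$, and at $W$, which is the assertion (the terms are complexes of free $S$-modules by construction). The only step carrying real content is the image computation of the second paragraph; once the structure constants $c_i$ are invertible, everything else is formal, so I do not anticipate a genuine obstacle — the hypothesis $\chara({\sf k})=0$ is doing exactly the work of making $\cdot y_2$ surject onto $L$.
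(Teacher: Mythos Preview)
Your argument is correct and follows essentially the same route as the paper's proof: verify via the Leibniz rule that $\partial(y_2u)-y_2\partial(u)=gyu$ vanishes over $S$, compute the structure constants $y_2y_i=c_i y_{i+2}$ with $c_i=\lfloor i/2\rfloor+1$ invertible when $\chara(\mathsf{k})=0$, deduce that $\cdot y_2$ is a split $Q$-injection with image $L_n$, and then invoke \cref{omega}. The only cosmetic difference is that the paper writes the constant as $1+j$ for $i=2j+\varepsilon$ rather than your floor expression.
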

\begin{proof}
Let $\alpha\colon \Sigma^{2}U\to U$ denote the map given by multiplication by $y_2$.

Let $n\ge 0$. 
If $x\in U_{n}$, then
$$
\partial(y_2x)-y_2\partial(x)=gyx+y_2\partial(x)-y_2\partial(x)=gyx
\,.$$
This shows that, upon tensoring with $S$, the map $\alpha$ becomes a chain map. 
To show that $\alpha$ is injective, recall that, when $f$ is an exact zero-divisor, we have
$$
U_{n}=\bigoplus_{i=0}^{n} y_iV_{n-i}
\qquad\text{and}\qquad  y_2 y_{2j+\epsilon} = (1+i)y_{2+2j + \epsilon}\,
$$
for all $n\ge 0$, $j \geq 0$ and $\epsilon \in \{0,1\}$.
When $\chara(\kk)=0$ we see that $1+j$ is a unit for all $i\geq 0$, and thus $y_2y_i=0$ implies $y_i=0$. It follows from here that $\alpha_n\colon U_{n-2}\to U_n$ is injective. Another consequence is that $y_2y_iV_{n-2-i}=y_{i+2}V_{n-2-i}$ and hence
$$\Img(\alpha_n)=y_2\bigoplus_{i=0}^{n-2} y_iV_{n-2-i}=\bigoplus_{i=2}^{n}y_iV_{n-i}=L_n\,. 
$$ 
Since $L_{n}$ is a direct summand of $U_n$, the map $\alpha_n$ remains injective when tensored with $S$.  
Using  \cref{omega}, we further  have
\begin{align*}
\Img(\alpha\otimes_QS)= L\otimes_Q S &= \Ker(\omega). 
\end{align*} 
Since $\omega$ is surjective, this finishes the proof of the exactness of \eqref{sesq}. 
\end{proof}

\begin{theorem} \label{mth}
Let $(Q,\m,\sf k)$ be a local ring such that $\chara({\sf k}) = 0$. Let $(f,g)$ be an exact pair of zero divisors in $Q$ and set $R = Q/(f)$ and $S = Q/(f,g)$. Let $M$ be an $R$-module and let $(W,\partial^W)$ denote the mapping cone of the Eisenbud operator $\tau$ associated with $f$.

For any $R$-module $N$ with $gN=0$, there are have two long exact sequences 
\begin{align*}
    \cdots \xrightarrow{}& \Tor_{n-1}^R(M,N) \xrightarrow{}  \Ho_{n}(W \otimes_S N) \xrightarrow{\psi_{n}^N} \Tor_{n}^R(M,N) \xrightarrow{\delta_{n}^N} \Tor_{n-2}^R(M,N) \xrightarrow{} \cdots \\ 
    \cdots \xrightarrow{}& \Ho_{n+1}(W \otimes_S N) \xrightarrow{\mu_{n+1}^N} \Tor_{n-2}^Q(M,N) \xrightarrow{} \Tor^Q_{n}(M,N) \xrightarrow{\phi_n^N} \Ho_{n}(W \otimes_S N) \xrightarrow{} \cdots 
\end{align*}  
where $\delta^N$ is induced by $\tau$ and $\psi^N\phi^N$ is equal to the map $\Tor^Q(M,N)\to \Tor^R(M,N)$ induced by the canonical projection $Q\to R$. 
\end{theorem}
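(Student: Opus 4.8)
The plan is to obtain both long exact sequences by applying $-\otimes_S N$ to two short exact sequences of complexes of free $S$-modules already constructed in the paper — namely \eqref{short} for the first and the sequence \eqref{sesq} of \cref{2sesc} for the second — and then to identify the composite $\psi^N\phi^N$ by chasing the explicit descriptions of $\omega$ and $\gamma$.

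First I would note that $N$, being an $R$-module with $gN=0$, is an $S$-module, so all functors $-\otimes_S N$ below are defined; moreover every complex occurring in \eqref{short} and in \eqref{sesq} is degreewise free over $S$, so tensoring either short exact sequence with $N$ over $S$ preserves degreewise exactness. Applying this to \eqref{short}, identifying the homology of $(V\otimes_Q S)\otimes_S N=U'\otimes_R N$ with $\Tor^R(M,N)$ and accounting for the degree shift built into $\zeta$, one recovers the first long exact sequence; this is precisely \eqref{long}, with the map $\Ho_n(W\otimes_S N)\to\Tor_n^R(M,N)$ induced by $\gamma$ now denoted $\psi_n^N$, and $\delta_n^N$ induced by $\tau$.

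For the second sequence I would apply $-\otimes_S N$ to \eqref{sesq}, obtaining the short exact sequence of complexes
\[
0\longrightarrow \Sigma^2(U\otimes_Q N)\longrightarrow U\otimes_Q N\xrightarrow{\omega\otimes_S N} W\otimes_S N\longrightarrow 0,
\]
where I use $(U\otimes_Q S)\otimes_S N=U\otimes_Q N$ and $(\Sigma^2U\otimes_Q S)\otimes_S N=\Sigma^2(U\otimes_Q N)$. Since $U$ is a free resolution of $M$ over $Q$, one has $\Ho_n(U\otimes_Q N)=\Tor_n^Q(M,N)$ and $\Ho_n(\Sigma^2(U\otimes_Q N))=\Tor_{n-2}^Q(M,N)$, so the associated homology long exact sequence is exactly the second displayed sequence, with $\mu^N$ the connecting homomorphism, the maps $\Tor_{n-2}^Q(M,N)\to\Tor_n^Q(M,N)$ induced by multiplication by $y_2$, and $\phi_n^N$ induced by $\omega$.

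It then remains to check the composition formula. Since $\phi^N$ is induced by $\omega$ and $\psi^N$ by the projection $\gamma$ of \eqref{short}, the composite $\psi^N\phi^N$ is induced by $(\gamma\circ\omega)\otimes_S N$. Writing $u=a+yx+l$ with $a\in V_n$, $x\in V_{n-1}$, $l\in L_n$ as in \eqref{u}, we have $\omega_n(u\otimes 1_S)=(x,a)\otimes 1_S$ and $\gamma_n((x,a)\otimes 1_S)=a\otimes 1_S$, hence $\gamma\circ\omega=\pi\otimes_Q S$, where $\pi\colon U\to V$ is the canonical projection of the decomposition \eqref{decomposition}. As in the proof of \cref{reg_long}, $\pi\otimes_Q R\colon U\otimes_Q R\to V\otimes_Q R=U'$ is a chain map lifting $\id_M$, so $(\gamma\circ\omega)\otimes_S N=(\pi\otimes_Q R)\otimes_R N$ induces in homology the canonical change-of-rings map $\Tor^Q(M,N)\to\Tor^R(M,N)$, which gives the claim. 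The substantive content — in particular the only use of $\chara(\kk)=0$ — is already carried by \cref{2sesc} and \cref{omega}; what remains here is bookkeeping, the one slightly delicate point being the verification that $\gamma\circ\omega=\pi\otimes_Q S$, so that $\psi^N\phi^N$ becomes the canonical map.
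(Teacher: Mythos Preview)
Your proof is correct and follows essentially the same route as the paper: both long exact sequences are obtained by tensoring the short exact sequences \eqref{short} and \eqref{sesq} with $N$ over $S$, and the identification of $\psi^N\phi^N$ with the change-of-rings map proceeds by recognizing $\gamma\circ\omega$ as the projection $\pi\otimes_Q S$ lifting $\id_M$. Your write-up is in fact slightly more explicit than the paper's in verifying that $\gamma\circ\omega=\pi\otimes_Q S$ and in noting why exactness is preserved under $-\otimes_S N$.
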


\begin{proof}
Throughout this proof, we use the notation in \ref{mc}. The first exact sequence has already been established in \eqref{long}. It remains thus to establish the second one. 

Tensoring the short exact sequence \eqref{sesq} of free $S$-modules with $N$, we have the short exact sequence
    \begin{align}\label{nsesr}
        0 \to \Sigma^2U \otimes_Q N \to U \otimes_Q N \xrightarrow{\omega \otimes_R N} W\otimes_QN  \to 0\,.
    \end{align}
 The induced long exact sequence in homology is exactly the second sequence in the statement, with $\Ho_n(U \otimes_Q N) =\Tor_n^Q(M,N)$.  The map $\psi_n\phi_n$ is the composition 
 $$
 \Ho_n(U\otimes_QN)\to \Ho_{n}(W\otimes_SN)\to \Ho_n(V\otimes_QN)
 $$
 where the first map is induced by $\omega$ and the second one is induced by the map $\gamma$ in \eqref{short}. The composition is thus induced by the canonical projection $\pi\otimes_QR\colon U\otimes_QR\to V\otimes_QR=U'$. The homomorphism $\pi\otimes_QR$ lifts the identity map on $M$, and thus, upon tensoring with $N$, it describes the canonical map induced in homology by the projection $Q\to R$. 
\end{proof}

\section{Complexity and Curvature}
\label{s:cx-curv}
In this section, we compare the asymptotic behavior of betti numbers/homology via the canonical projection $Q\to Q/(f)$ where $(Q,\m,\kk)$ is a local ring and $f\in Q$ is an exact zero divisor. We use the notions of complexity and curvature to quantify asymptotic behavior.

\begin{bfchunk}{Poincar\'e series and length complexity.}\label{complex} Suppose $M$, $N$ are finite $Q$-modules with the property that $\ell(M \otimes_Q N) < \infty$ where $\ell$ denotes length. Then, for every nonnegative integer $n$ the length of $\Tor^Q_n(M, N)$ is finite over $Q$. We set $$\be^Q_n(M,N) \coloneqq \ell(\Tor^Q_n(M,N))$$ as defined in \cite[Section 3]{hzerodiv} for example. 

The (generalized) {\it Poincar\'e series } of the pair $(M,N)$ is the formal power series
$$\Po^Q_{M,N}(t) \coloneqq \sum_{i=0}^{\infty} \be_i^Q(M,N)t^i \,.$$

Note that $\Po^Q_{M,\kk}(t)$ is the Poincar\'e series of $M$ over $Q$, usually denoted $\Po^Q_M(t)$. 
The {\it complexity} of a formal power series $p(t) = \sum_{n \ge 0} a_nt^n$ with $a_n \in \R$ for all $n\ge 0$ is defined as follows:
\begin{align*}
\cx(p) &\coloneqq \inf\{c \in \Z^+ \colon \exists \al > 0 \,\,  \text{such that } |a_n| \leq \al n^{c-1} ,\, \forall n \gg 0\}\,.
\end{align*}
In this definition, observe that we can replace $n\gg 0$ with $\forall n$. 
Indeed, when $\cx(p) = c$, we have $|a_N| \le \al_1N^{c-1}$ for some $N \gg 0$. We can set $\al = \max\{|a_i|, \al_1\}$ for $i \le N-1$ which guarantees the inequality for all $n$. 

The {\it length complexity of the pair} $(M,N)$, denoted by $\ell\cx_Q(M,N)$, is defined by
$$ \ell\cx_Q(M,N) \coloneqq \cx(\Po^Q_{M,N}(t))\,.$$ 
The {\it complexity} of a $Q$-module $M$ is $\cx_Q(M)\coloneqq \ell\cx_Q(M,{\sf k})$.

\end{bfchunk}
\begin{lemma}\label{comp}
If $p$ and $q$ are formal power series with real coefficients, then 
$$\cx(p \cdot q) \leq \cx(p) + \cx(q) \,.$$
\end{lemma}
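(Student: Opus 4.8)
The plan is to reduce to a statement about coefficient growth. Write $p(t) = \sum_{n\ge 0} a_n t^n$ and $q(t) = \sum_{n\ge 0} b_n t^n$, and set $c = \cx(p)$, $d = \cx(q)$. If either complexity is infinite, the inequality is vacuous, so I may assume $c, d < \infty$. By the remark in \ref{complex} (that $n \gg 0$ can be replaced by $\forall n$ in the definition of $\cx$), there exist constants $\alpha, \beta > 0$ with $|a_n| \le \alpha n^{c-1}$ and $|b_n| \le \beta n^{d-1}$ for \emph{all} $n \ge 0$ — here I interpret $n^{c-1}$ sensibly, e.g.\ using $(n+1)^{c-1}$ or just noting the bound holds for $n \ge 1$ and adjusting the constant at $n = 0$.

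Next I would estimate the coefficient $e_N$ of $t^N$ in the product $p \cdot q$, which is $e_N = \sum_{i+j=N} a_i b_j$. The triangle inequality gives $|e_N| \le \sum_{i=0}^{N} |a_i|\,|b_{N-i}| \le \alpha\beta \sum_{i=0}^N i^{c-1}(N-i)^{d-1}$ (with the convention $0^{k-1}$ handled by the constant adjustment above). I would then bound each factor crudely by $i^{c-1} \le N^{c-1}$ and $(N-i)^{d-1} \le N^{d-1}$, pulling them out of the sum, so that $|e_N| \le \alpha\beta\, N^{c-1} N^{d-1} (N+1) \le \alpha\beta\, (N+1)^{c+d-1}$, possibly after absorbing constants. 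This exhibits a bound of the form $|e_N| \le \gamma N^{(c+d)-1}$ for all large $N$ (adjusting $\gamma$ to cover small $N$), which by definition means $\cx(p\cdot q) \le c + d = \cx(p) + \cx(q)$.

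The only genuinely delicate point is handling the edge cases cleanly: when $c = 0$ (so $a_n = 0$ for all large $n$, hence $p$ is essentially a polynomial and $p \cdot q$ has the same complexity as $q$ truncated — the inequality still holds since $\cx(p\cdot q) \le \cx(q) = 0 + d$), when $d = 0$ symmetrically, and the treatment of the exponent $n^{c-1}$ at $n = 0$ or when $c - 1 < 0$. All of these are routine once one commits to a uniform convention, e.g.\ replacing $n^{c-1}$ by $\max(1, n)^{c-1}$ throughout, or simply stating the bounds for $n \ge 1$ and noting the $n = 0$ term contributes a fixed constant that can be absorbed. I expect this bookkeeping to be the main (minor) obstacle; the core estimate $\sum_{i+j=N} i^{c-1}(N-i)^{d-1} \le (N+1)N^{c+d-2}$ is immediate.

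\begin{proof}
Write $p(t) = \sum_{n\ge 0} a_n t^n$ and $q(t) = \sum_{n\ge 0} b_n t^n$, and set $c = \cx(p)$ and $d = \cx(q)$. If $c = \infty$ or $d = \infty$ the claimed inequality is trivial, so assume $c, d < \infty$. As noted in \ref{complex}, we may choose $\alpha, \beta > 0$ so that
\[
|a_n| \le \alpha (n+1)^{c-1} \quad\text{and}\quad |b_n| \le \beta (n+1)^{d-1} \qquad \text{for all } n \ge 0\,.
\]
Let $p(t)q(t) = \sum_{N\ge 0} e_N t^N$, so that $e_N = \sum_{i=0}^{N} a_i b_{N-i}$. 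Then, for all $N \ge 0$,
\[
|e_N| \le \sum_{i=0}^{N} |a_i|\,|b_{N-i}| \le \alpha\beta \sum_{i=0}^{N} (i+1)^{c-1}(N-i+1)^{d-1} \le \alpha\beta\,(N+1)\,(N+1)^{c-1}(N+1)^{d-1}\,,
\]
where the last inequality uses $i+1 \le N+1$ and $N-i+1 \le N+1$ for each $i$ in the range. Hence $|e_N| \le \alpha\beta\,(N+1)^{(c+d)-1}$ for all $N \ge 0$, which by definition gives $\cx(p\cdot q) \le c + d = \cx(p) + \cx(q)$.
\end{proof}
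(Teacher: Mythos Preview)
Your proof is correct and follows essentially the same route as the paper's: bound each summand $|a_i b_{N-i}|$ in the convolution by the crude estimate $N^{c-1}N^{d-1}$ (or $(N+1)^{c-1}(N+1)^{d-1}$ in your version) and pick up one extra factor of $N$ from the number of terms. Your use of $(n+1)^{c-1}$ in place of $n^{c-1}$ is in fact a bit tidier than the paper's write-up, which glosses over the $i=0$ and $i=n$ endpoints.
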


\begin{proof}
Let $p(t) = \sum_{i=0}^{\infty} a_nt^n, q(t) = \sum_{i = 0}^
{\infty} b_nt^n$, $\cx(p) = c$ and $\cx(q) = c'$. Denote the $n$th-coefficient of the product of the power series $p$ and $q$ by $d_n$. Notice that $d_n = \sum_{i=0}^na_ib_{n-i}$. 

For big enough $\al$ and $\al'$ and for all $n$ we have 
\begin{align*}
    \left |a_ib_{n-i} \right | &\leq \al(i^{c-1}(n-i)^{c'-1}) \leq \al(n^{c-1}(n-i)^{c'-1}) 
    \leq \al' n^{c+c'-2} \\
     |d_n|&=\left | \sum_{i=0}^na_ib_{n-i} \right |\le \sum_{i=0}^n \left |a_ib_{n-i} \right |
      \leq n \al'n^{c+c'-2} 
     = \al'n^{c+c'-1} \, .
\end{align*}
By the definition of complexity, we have the desired conclusion. \end{proof}

If $p(t) = \sum_{n \ge 0} a_nt^n, q(t) = \sum_{n \ge 0} b_nt^n$ are formal power series with real coefficients, coefficient-wise inequality is denoted $\preccurlyeq$ and is defined as follows:
$$p(t) \preccurlyeq q(t) \,\, \iff\,\, a_n \leq b_n \,\, \text{for all} \, n \,.$$

\begin{theorem}\label{theorem}
Let $(Q,\m, \sf k)$ be a local ring with $\chara({\sf k}) = 0$ and $(f,g)$ is an exact pair of zero divisors in $Q$. We set $R = Q/(f)$ and $S = Q/(f,g)$. Then, for every finitely generated $R$-modules $M$, $N$ with $gN = 0$ and $\ell(M \otimes_Q N) < \infty$ we have 
\begin{align}\label{poincare1}
\Po^R_{M,N}(t) &\preccurlyeq \dfrac{(1-t+t^2)\Po^Q_{M,N}(t)}{(1-t)} \qquad \text{and}\\
\label{poincare2}
\Po^Q_{M,N}(t) &\preccurlyeq \dfrac{\Po^R_{M,N}(t)}{(1-t)} \,.
\end{align}
Furthermore, equality holds in \eqref{poincare2} if and only if the maps $\mu^N$ and $\delta^N$ from the exact sequences of \cref{mth} are both equal to zero. 
\end{theorem}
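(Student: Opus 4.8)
The plan is to extract the two coefficient-wise inequalities directly from the two long exact sequences of \cref{mth}, using additivity of length along exact sequences, and then to handle the equality clause in \eqref{poincare2} by tracking exactly which terms survive.

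First I would set $b_n = \ell(\Tor_n^Q(M,N))$, $c_n = \ell(\Tor_n^R(M,N))$, and $w_n = \ell(\Ho_n(W\otimes_S N))$; all are finite because $\ell(M\otimes_Q N)<\infty$ forces finite length on all the $\Tor$'s, and $W\otimes_S N$ is built from the $V_i\otimes_Q R\cong U_i'$, so its homology has finite length too (each $\Ho_n(W\otimes_S N)$ is a subquotient of a finite-length module). The first long exact sequence of \cref{mth} breaks at each spot into short exact sequences, and additivity of length gives, for each $n$, an inequality of the shape $w_n \le c_{n-1} + c_n$ more precisely, splitting the sequence $\Tor_{n-1}^R\to \Ho_n(W\otimes_S N)\xrightarrow{\psi_n}\Tor_n^R\xrightarrow{\delta_n}\Tor_{n-2}^R$ yields $w_n = \operatorname{rk}(\text{in}) + \operatorname{rk}(\psi_n)$ with $\operatorname{rk}(\text{in})\le c_{n-1}$ and $\operatorname{rk}(\psi_n)\le c_n$. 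Summing against $t^n$ gives $\sum w_n t^n \preccurlyeq (1+t)\,\Po^R_{M,N}(t)$. Doing the same with the second long exact sequence $\Ho_{n+1}(W\otimes_S N)\xrightarrow{\mu_{n+1}}\Tor_{n-2}^Q\to\Tor_n^Q\xrightarrow{\phi_n}\Ho_n(W\otimes_S N)$ gives $b_n \le b_{n-2} + w_n$ after accounting for the map out of $\Ho_{n+1}$, hence $(1-t^2)\Po^Q_{M,N}(t)\preccurlyeq \sum w_n t^n$. Actually the bookkeeping is cleaner if one writes each exact sequence as a single acyclic complex of finite-length modules and invokes the fact that the alternating sum of lengths over any finite window is controlled; but the short-exact-sequence splitting is the most transparent. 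Combining the two chains of inequalities: $(1-t^2)\Po^Q_{M,N}(t)\preccurlyeq \sum w_n t^n\preccurlyeq (1+t)\Po^R_{M,N}(t)$, and dividing by $(1-t^2)=(1-t)(1+t)$ (a series with nonnegative... here we must be careful: $(1-t^2)^{-1}=1+t^2+t^4+\cdots$ has nonnegative coefficients, so multiplying a $\preccurlyeq$ inequality by it preserves $\preccurlyeq$) gives $\Po^Q_{M,N}(t)\preccurlyeq (1-t)^{-1}\Po^R_{M,N}(t)$, which is \eqref{poincare2}. For \eqref{poincare1}, I would instead combine $\sum w_n t^n\preccurlyeq (1+t)\Po^R_{M,N}(t)$ with the reverse-direction consequence of the second sequence, namely $\sum w_n t^n \preccurlyeq b_n$-side bound $\Po^R_{M,N}(t)\preccurlyeq\sum w_nt^n\cdot(\text{something})$ — more directly, from the first sequence one also gets $c_n \le w_n + w_{n+1}$-type lower control via the maps into $\Ho$, giving $\Po^R_{M,N}(t)\preccurlyeq (1+t)\sum' $, and then feed in $\sum w_n t^n \preccurlyeq \Po^Q_{M,N}(t) + t^2(\cdots)$ from the second sequence to land the factor $(1-t+t^2)/(1-t)$. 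This last algebraic manipulation — getting precisely $(1-t+t^2)/(1-t)$ rather than a cruder bound like $(1+t)/(1-t)$ — is where I'd need to be careful to use \emph{both} exact sequences simultaneously and not double-count the $\Ho(W\otimes_S N)$ terms; writing $h(t)=\sum w_n t^n$ and eliminating it between the two inequality systems should produce the stated numerator.

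For the equality statement: \eqref{poincare2} is an equality iff it is a coefficient-wise equality in every degree, iff all the intermediate $\preccurlyeq$'s are equalities. Chasing back, $\Po^Q_{M,N}(t)=(1-t)^{-1}\Po^R_{M,N}(t)$ is equivalent to $(1-t^2)\Po^Q_{M,N}(t)=h(t)=(1+t)\Po^R_{M,N}(t)$, i.e.\ to $b_n = b_{n-2}+w_n$ for all $n$ \emph{and} $w_n = c_{n-1}+c_n$ for all $n$. The first family of equalities holds iff in the second long exact sequence the connecting pieces degenerate, precisely iff $\mu^N_{n+1}=0$ and the map $\Tor_{n-2}^Q\to\Tor_n^Q$ is injective for all $n$; but $\mu^N=0$ already forces that injectivity (the sequence then splits into $0\to\Tor_{n-2}^Q\to\Tor_n^Q\xrightarrow{\phi_n}\Ho_n(W\otimes_S N)\to 0$), so the first family $\iff \mu^N=0$. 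Similarly the second family $w_n=c_{n-1}+c_n$ holds iff in the first long exact sequence $\delta^N_n=0$ for all $n$ (then $\psi_n$ is surjective and the sequence splits as $0\to\Tor_{n-1}^R\to\Ho_n(W\otimes_S N)\to\Tor_n^R\to 0$). Hence equality in \eqref{poincare2} $\iff \mu^N=0$ and $\delta^N=0$, as claimed. I should double-check the edge direction: if $\mu^N=0$ and $\delta^N=0$ then running both splittings forward gives $w_n=c_{n-1}+c_n$ and $b_n-b_{n-2}=w_n$, whence $\Po^Q_{M,N}(t)(1-t^2)=(1+t)\Po^R_{M,N}(t)$ and dividing by $(1-t^2)$ yields equality in \eqref{poincare2}.

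**The main obstacle** will be the precise algebraic elimination producing the numerator $1-t+t^2$ in \eqref{poincare1}: one must combine the two exact sequences in just the right way so that the $\Ho(W\otimes_S N)$ contributions telescope correctly, rather than settling for the weaker $(1+t)/(1-t)$ bound. Everything else — finiteness of lengths, additivity along short exact sequences, and the nonnegativity of the coefficients of $(1-t)^{-1}$ and $(1-t^2)^{-1}$ needed to preserve $\preccurlyeq$ under multiplication — is routine. A secondary care point is that the displayed exact sequences are infinite in both directions in principle, but since $M,N$ are finitely generated over a Noetherian ring and $\Tor_n=0$ for $n<0$, the bookkeeping terminates on the left, so the power-series identities make sense as honest identities of formal power series.
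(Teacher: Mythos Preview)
Your treatment of \eqref{poincare2} and of the equality clause is correct and is essentially the paper's argument: sandwich $h(t)=\sum w_n t^n$ between $(1-t^2)\Po^Q_{M,N}(t)$ and $(1+t)\Po^R_{M,N}(t)$ using the two long exact sequences, then multiply by $(1-t^2)^{-1}$; equality forces both sandwich inequalities to be equalities, which in turn forces $\mu^N=0$ and $\delta^N=0$.

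For \eqref{poincare1}, however, the gap you flagged is real, and the tentative estimates you wrote down (``$c_n\le w_n+w_{n+1}$'' and ``$h(t)\preccurlyeq \Po^Q_{M,N}(t)+t^2(\cdots)$'') are not the ones that work. The paper's trick is to read each long exact sequence in the \emph{other} direction than you used for \eqref{poincare2}. From the first sequence, looking at $\Ho_n(W\otimes_S N)\xrightarrow{\psi_n}\Tor_n^R\xrightarrow{\delta_n}\Tor_{n-2}^R$ one gets $c_n\le w_n+c_{n-2}$, hence $(1-t^2)\Po^R_{M,N}(t)\preccurlyeq h(t)$. From the second sequence, the crucial point is that $\mu_n$ maps $\Ho_n(W\otimes_S N)$ to $\Tor_{n-3}^Q(M,N)$ (a shift of~$3$, not~$2$), so the stretch $\Tor_n^Q\xrightarrow{\phi_n}\Ho_n(W\otimes_S N)\xrightarrow{\mu_n}\Tor_{n-3}^Q$ yields $w_n\le b_n+b_{n-3}$, hence $h(t)\preccurlyeq (1+t^3)\Po^Q_{M,N}(t)$. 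Chaining these and multiplying by $(1-t^2)^{-1}$ (nonnegative coefficients) gives
\[
\Po^R_{M,N}(t)\ \preccurlyeq\ \frac{1+t^3}{1-t^2}\,\Po^Q_{M,N}(t)\ =\ \frac{(1+t)(1-t+t^2)}{(1-t)(1+t)}\,\Po^Q_{M,N}(t)\ =\ \frac{1-t+t^2}{1-t}\,\Po^Q_{M,N}(t),
\]
which is exactly \eqref{poincare1}. So the factor $1-t+t^2$ is not produced by any clever telescoping or simultaneous use of both sequences; it drops out of the factorization $1+t^3=(1+t)(1-t+t^2)$, and the reason $t^3$ (rather than $t^2$) appears is the degree shift built into the connecting map $\mu$.
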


\begin{proof}
Recall the following long exact sequence from \cref{mth}. 
\begin{equation} \label{lesq}
\cdots\xrightarrow{\mu_{n+3}^N} \Tor_n^Q(M,N) \to \Tor^Q_{n+2}(M,N) \to \Ho_{n+2}(W \otimes_S N) \xrightarrow{\mu_{n+2}^N} \Tor^Q_{n-1}(M,N) \to \cdots 
\end{equation}
A length count gives
$$
\ell(\Ho_{n+2}(W \otimes_S N)) \leq \be_{n+2}^Q(M,N) + \be_{n-1}^Q(M,N)
$$
for all $n\ge 0$. Using these inequalities for all $n$, we further obtain
\begin{align}\label{poiq}
\begin{split}
    \sum_{n=-2}^\infty \ell(\Ho_{n+2}(W\otimes_S N))t^{n+2} &\preccurlyeq \sum_{n=-2}^\infty \be_{n+2}^Q(M,N)t^{n+2} + \sum_{n=-2}^\infty \be_{n-1}^Q(M,N) t^{n+2}\\
    & = (1 + t^3) \Po^Q_{M,N}(t)\,.
\end{split}
\end{align}

Similarly, we use the other long exact sequence from \cref{mth}, namely 
\begin{equation}\label{lesr}
\cdots \xrightarrow{\delta_{n+3}^N} \Tor_{n+1}^R(M,N) \to \Ho_{n+2}(W\otimes_S N) \to \Tor_{n+2}^R(M,N)\xrightarrow{\delta_{n+2}^N} \Tor_{n}^R(M,N)\to \cdots
\end{equation}

A length count gives
$$
  \ell(\Ho_{n+2}(W\otimes_S N)) \geq \be_{n+2}^R(M,N) - \be_{n}^R(M,N)\,.
$$
Using these inequalities for all $n$, we have:
\begin{align}\label{poir}
\begin{split}
    \sum_{n=-2}^\infty \ell(\Ho_{n+2}(W\otimes_S N))t^{n+2} &\succcurlyeq \sum_{n=-2}^\infty \be_{n+2}^R(M,N)t^{n+2} - \sum_{n=-2}^\infty \be_{n}^R(M,N) t^{n+2}\\
    & = (1 - t^2) \Po^R_{M,N}(t)\,.
\end{split}
\end{align}
From the results of \eqref{poiq} and \eqref{poir}, we get
$$ (1-t) \Po^R_{M,N}(t) \preccurlyeq (1-t + t^2)\Po^Q_{M,N}(t) \,.$$
Since $0\preccurlyeq(1-t)^{-1}$, the inequality is preserved by multiplying both sides by $(1-t)^{-1}$ and this gives the first inequality of the theorem. 

To get the second inequality, we use again a length count in \eqref{lesq}:
$$
\ell(\Ho_{n+2}(W \otimes_S N)) \geq \be_{n+2}^Q(M,N) - \be_{n}^Q(M,N)\
$$
with equality if and only if $\mu_{n+2}^N=0=\mu^N_{n+3}$. Using these inequalities for all $n$, we further obtain
\begin{align}\label{poiq2}
\begin{split}
    \sum_{n=-2}^\infty \ell(\Ho_{n+2}(W\otimes_S N))t^{n+2} &\succcurlyeq \sum_{n=-2}^\infty \be_{n+2}^Q(M,N)t^{n+2} - \sum_{n=-2}^\infty \be_{n}^Q(M,N) t^{n+2}\\
    & = (1 - t^2) \Po^Q_{M,N}(t) \,,
\end{split}
\end{align}
with equality if and only if $\mu^N=0$. 
Similarly, a length count in \eqref{lesr} gives
$$
\ell(\Ho_{n+2}(W\otimes_S N)) \leq \be_{n+2}^R(M,N) + \be_{n+1}^R(M,N)
$$
with equality if and only if $\delta_{n+2}^N=0=\delta_{n+3}^N$. Using these inequalities for all $n$ we further have
\begin{align}\label{poir2}
\begin{split}
    \sum_{n=-2}^\infty \ell(\Ho_{n+2}(W\otimes_S N))t^{n+2} &\preccurlyeq \sum_{n=-2}^\infty \be_{n+2}^R(M,N)t^{n+2} + \sum_{n=-2}^\infty \be_{n+1}^R(M,N) t^{n+2}\\
    & = (1 + t) \Po^R_{M,N}(t) \,,
\end{split}
\end{align}
with equality if and only if $\delta^N=0$. 
From \eqref{poiq2} and \eqref{poir2}, we get
$$(1-t^2)\Po^Q_{M,N}(t) \preccurlyeq (1+t)\Po^R_{M,N}(t) \,.$$ 
Observing that $0\preccurlyeq (1-t^2)^{-1}$, we then obtain the desired inequality by multiplying both sides with $(1-t^2)^{-1}$.  

\end{proof}

\begin{corollary} \label{connec}
With the hypotheses of \cref{mth}, if $f\in \m\smallsetminus \m^2$ and $\m N=0$, then $\mu^{N}=0=\delta^{N}$. 
\end{corollary}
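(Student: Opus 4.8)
The plan is to show directly that, under the hypotheses $f\in\m\smallsetminus\m^2$ and $\m N=0$, the maps $\mu^N$ and $\delta^N$ vanish, and the cleanest route is to prove something stronger: that the complex $W\otimes_S N$ splits as a direct sum of complexes of the residue field $\kk$ that already account for all the homology of both $\Tor^R(M,N)$ and $\Tor^Q(M,N)$ simultaneously, forcing the connecting maps in the exact sequences of \cref{mth} to be zero for length/dimension reasons. More concretely, when $f\in\m\smallsetminus\m^2$, the ideal $(f)$ is part of a minimal generating set of $\m$, so $R=Q/(f)$ has $\m_R=\m/(f)$ generated by one fewer element, and the Tate resolution $A$ of \eqref{A} together with the semi-free resolution $U$ over $A$ have particularly simple reductions mod $\m$. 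First I would observe that since $\m N=0$ we may regard $N$ as a $\kk$-vector space, and since $f,g\in\m$ we have $fN=gN=0$ automatically; thus $U'\otimes_R N$, $U\otimes_Q N$, and $W\otimes_S N$ are all obtained from the corresponding complexes of free modules by tensoring over $\kk$ with the vector space $N$, so it suffices to treat $N=\kk$ and then extend by exactness of $-\otimes_\kk N$.

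The core computation: with $N=\kk$, the differential of $U'\otimes_R\kk$ is $\partial^V\otimes_Q\kk$, whose entries lie in $\m$; I would argue that because $f\notin\m^2$, one can choose the semi-free $A$-module resolution $U$ of $M$ so that the decomposition \eqref{decomposition} is compatible with minimality of $U'$ over $R$ — that is, $\partial^V_n(V_n)\subseteq\m V_{n-1}$, equivalently the minimal $R$-free resolution lifts to $V$. Then \cref{taudesc} shows $\tau_n(x\otimes 1_S)=-x_{n-1}\otimes 1_S$ where $x_{n-1}\in V_{n-1}$ is the $V$-component of $\partial_{n+1}(x)$; the point is that this component, by the Leibniz rule applied to $\partial(y_2)=gy$ and $\partial(y)=f$ with $f,g\in\m$, actually lands in $\m V_{n-1}$, so $\tau\otimes_S\kk=0$. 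Hence $\delta_n^N=0$ for all $n$ immediately, and the first exact sequence of \cref{mth} degenerates into split short exact sequences $0\to\Tor_{n-1}^R(M,\kk)\to\Ho_n(W\otimes_S\kk)\to\Tor_n^R(M,\kk)\to 0$. For $\mu^N$, I would then feed this back into the short exact sequence \eqref{nsesr} (equivalently \eqref{sesq} tensored with $\kk$): the multiplication map $\cdot y_2\colon \Sigma^2 U\otimes_Q\kk\to U\otimes_Q\kk$ also has image in $\m(U\otimes_Q\kk)$ — in fact it is the zero map after tensoring with $\kk$ because $y_2$ has positive degree and the reduction identifies $L\otimes_Q\kk$ with a complement — wait, more carefully: since $\Img(\cdot y_2)=L\otimes_Q S$ by \cref{2sesc} and $L$ is a direct summand of $U$, the map $\cdot y_2\otimes_S\kk$ is injective, so \eqref{nsesr} with $N=\kk$ gives a short exact sequence of complexes in which one checks the connecting homomorphism is $\mu^N$; I would show $\mu^N=0$ by dimension count, since $\dim_\kk\Ho_n(W\otimes_S\kk)=\be_{n-1}^R(M,\kk)+\be_n^R(M,\kk)$ from the previous step, and separately $\dim_\kk\Ho_n(U\otimes_Q\kk)=\be_n^Q(M,\kk)$, and these Betti numbers are linked by the known exact sequence; the only way the long exact sequence of \eqref{nsesr} is consistent with $\delta^N=0$ is if $\mu^N=0$ as well.

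A cleaner finish avoiding the last dimension-chase: once $\delta^N=0$ the first sequence of \cref{mth} gives $\Po^R_{M,N}(t)\,(1+t)=\sum_n\dim_\kk\Ho_n(W\otimes_S N)\,t^n$ exactly, and since $f\notin\m^2$ and $\m N=0$ the result \cite{quasi} cited in the introduction gives equality in \eqref{poincare2}, i.e. $\Po^Q_{M,N}(t)=\Po^R_{M,N}(t)/(1-t)$; substituting into the inequalities \eqref{poiq2} and \eqref{poir2} forces them to be equalities, and by the ``if and only if'' clauses recorded there this means precisely $\mu^N=0$ and $\delta^N=0$. I would present this as the main argument and relegate the direct vanishing of $\tau\otimes_S\kk$ (via \cref{taudesc} and $f,g\in\m$) as the key lemma that starts the engine. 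The main obstacle I anticipate is the bookkeeping in the first step: verifying that one can choose the semi-free $A$-structure on $U$ so that $\partial^V$ has entries in $\m$ — i.e. that the lifting $V$ of the minimal $R$-resolution is itself ``minimal enough'' — and correctly tracking, through the Leibniz rule and the multiplication table of $A$ in \ref{ezd}, that the $V_{n-1}$-component $x_{n-1}$ of $\partial_{n+1}(x)$ for $x\in V_{n+1}$ lies in $\m V_{n-1}$ rather than merely in $V_{n-1}$; this is where the hypothesis $f\notin\m^2$ is genuinely used and where a careless argument could break.
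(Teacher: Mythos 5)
Your final paragraph is, in essence, the paper's own proof: invoke \cite[Theorem 6.2]{quasi} to get $\Po^Q_{M,\kk}(t)=\Po^R_{M,\kk}(t)/(1-t)$ when $f\notin\m^2$, observe that $\m N=0$ lets you replace $\kk$ by $N$, and then read off $\mu^N=0=\delta^N$ from the ``if and only if'' clause at the end of \cref{theorem}. That argument is complete on its own; note in particular that it does not need the input ``once $\delta^N=0$'' with which you open that paragraph, since equality in \eqref{poincare2} already delivers both vanishings simultaneously via \cref{theorem}.

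The earlier ``core computation'' is therefore unnecessary, and as written it has a genuine gap. You claim that the $V_{n-1}$-component $x_{n-1}$ of $\partial_{n+1}(x)$, for $x\in V_{n+1}$, lies in $\m V_{n-1}$ ``by the Leibniz rule applied to $\partial(y_2)=gy$ and $\partial(y)=f$.'' But the Leibniz rule only constrains $\partial$ on $A$-multiples $y_i v$ of basis elements; it says nothing about $\partial(x)$ for $x$ a basis element of $V$, which is dictated by the chosen semi-free resolution $U$ and need not be minimal. What you actually need is that $U$ can be chosen with $\partial(U)\subseteq\m U$, i.e.\ minimal over $Q$, and since $\operatorname{rank}U_n=\sum_{i\le n}\operatorname{rank}V_i$ while $\operatorname{rank}V_n\ge\be_n^R(M,\kk)$, such a choice forces $\be_n^Q(M,\kk)=\sum_{i\le n}\be_i^R(M,\kk)$ --- which is precisely the \cite{quasi} equality you would be trying to avoid. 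So the ``direct'' route is circular unless you independently reprove that result; better to drop it and lead with the citation, as the paper does.
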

\begin{proof}
When $f\notin \m^2$, it follows from \cite[Theorem 6.2]{quasi} that we have an equality
$$\Po^Q_{M,{\sf k}}(t) = {\Po^R_{M,{\sf k}}(t)}/{(1-t)}\,.
$$ 
Obviously, if $\m N=0$, then we can replace $\kk$ with $N$ in this equality. Consequently, equality holds in $\eqref{poincare2}$, and \cref{theorem} gives the desired conclusion. 
\end{proof}
\begin{remark}
The inequality $\Po^Q_{M,N}(t) \preccurlyeq {\Po^R_{M,N}(t)}/{(1-t)}$ was previously proved using spectral sequences, without any characteristic assumption, in \cite[Theorem 3.2]{hzerodiv}.  

\end{remark}

\begin{corollary} \label{corol}
Let $(Q,\m, \sf k)$ be a local ring with $\chara({\sf k}) = 0$, $(f,g)$ an exact pair of zero divisors in $Q$ and set $R=Q/(f)$. For every finitely generated $R$-modules $M$, $N$ with $gN = 0$ and $\ell(M \otimes_Q N) < \infty$, we have 
     $$\ell \cx_Q(M,N) - 1 \leq \ell \cx_R(M,N)  \leq  \ell \cx_Q(M,N) + 1\,.$$
\end{corollary}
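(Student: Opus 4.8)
The plan is to read off the corollary from the coefficient-wise bounds \eqref{poincare1} and \eqref{poincare2} of \cref{theorem}, combined with \cref{comp} and the monotonicity of $\cx$ under $\preccurlyeq$. First I would record the elementary observation that if $p(t)=\sum_n a_nt^n$ and $q(t)=\sum_n b_nt^n$ have nonnegative coefficients and $p\preccurlyeq q$, then $\cx(p)\le\cx(q)$: from $0\le a_n\le b_n$ and a bound $|b_n|\le\al n^{c-1}$ one gets the same bound on $|a_n|$. All the power series at hand have nonnegative coefficients, because $\be_n^Q(M,N)$ and $\be_n^R(M,N)$ are lengths and the rational functions multiplying them expand with nonnegative coefficients; concretely $(1-t+t^2)/(1-t)=1+t^2+t^3+t^4+\cdots$ has every coefficient in $\{0,1\}$, so $\cx\big((1-t+t^2)/(1-t)\big)\le 1$, and $\cx\big((1-t)^{-1}\big)=1$ since $(1-t)^{-1}=\sum_{n\ge 0}t^n$.

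Then the two estimates fall out. Applying $\cx$ to \eqref{poincare1} and using the monotonicity above together with \cref{comp},
\[
\ell\cx_R(M,N)=\cx\big(\Po^R_{M,N}(t)\big)\ \le\ \cx\big((1-t+t^2)/(1-t)\big)+\cx\big(\Po^Q_{M,N}(t)\big)\ \le\ \ell\cx_Q(M,N)+1\,.
\]
Applying $\cx$ to \eqref{poincare2} in the same way,
\[
\ell\cx_Q(M,N)=\cx\big(\Po^Q_{M,N}(t)\big)\ \le\ \cx\big((1-t)^{-1}\big)+\cx\big(\Po^R_{M,N}(t)\big)\ \le\ \ell\cx_R(M,N)+1\,,
\]
and rearranging the second inequality gives $\ell\cx_Q(M,N)-1\le\ell\cx_R(M,N)$, which together with the first completes the proof.

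The hard part has, in effect, already been done in \cref{theorem}; what remains is purely bookkeeping with complexities of power series, so I do not anticipate a genuine obstacle. The only point to watch is that the comparison functions $(1-t+t^2)/(1-t)$ and $(1-t)^{-1}$ really do expand with nonnegative --- indeed bounded --- coefficients, which is what makes both the monotonicity of $\cx$ under $\preccurlyeq$ and \cref{comp} applicable and forces the correction term to be $1$ rather than larger.
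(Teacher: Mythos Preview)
Your proof is correct and follows essentially the same route as the paper: apply $\cx$ to the two coefficient-wise inequalities \eqref{poincare1}, \eqref{poincare2} and use \cref{comp}. The only cosmetic differences are that you make the monotonicity step $p\preccurlyeq q\Rightarrow\cx(p)\le\cx(q)$ explicit (the paper uses it tacitly) and that you bound $\cx\big((1-t+t^2)/(1-t)\big)\le 1$ directly via its power-series expansion, whereas the paper splits the factor as a product and writes $\cx((1-t)^{-1})+\cx(1-t+t^2)=1+0$.
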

\begin{proof}

Using \cref{comp}, \eqref{poincare1} and \eqref{poincare2}, we have the following inequalities of length complexities:
\begin{align*}
\ell\cx_R(M,N)  &\le \ell\cx_Q(M,N)  + \cx((1-t)^{-1}) + \cx(1 - t + t^2) \\
& = \ell\cx_Q(M,N) + 1 \\
\ell\cx_Q(M,N)  &\le \ell\cx_R(M,N)  +  \cx((1-t)^{-1}) \\
 & = \ell\cx_R(M,N) + 1 \,.\qedhere
\end{align*}
\end{proof}
\begin{remark}
When $f \in Q$ is a regular element, upper and lower bounds for $\cx_R(M)$ in terms of $\cx_Q(M)$ can be derived using \eqref{e:exact-regular} and are spelled out in the introduction.  Consequently, we have $\cx_R(M) < \infty \iff \cx_Q(M) < \infty$ in this case. When $f$ is an exact zero divisor, Bergh, Celikbas and Jorgensen \cite[Corollary 3.3]{hzerodiv} provide the upper bound in \cref{corol} for $\ell\cx_Q(M,N)$ (without any assumption on the characteristic) and question whether a lower bound for $\ell\cx_Q(M,{\sf k}) = \cx_Q(M)$ can be given using $\cx_R(M)$. Thus \cref{corol} provides a partial answer to this question. 
\end{remark}

\begin{bfchunk}{Curvature.}\label{curvature} Let $p(t) = \sum_n a_n t^n$ be a formal power series with real coefficients and radius of convergence $r(p)$. We define the {\it curvature} of $p(t)$ by
$$
\cur(p)\coloneq\frac{1}{r(p)}=\limsup_{n \to \infty} \sqrt[n]{|a_n|}\,.
$$
If $M$ is a finite $Q$-module then the curvature of the module $M$, denoted  $\cur_Q(M)$, is defined as the curvature of $\Po_M^Q(t)$, see \cite{Luchonotes}. 
When $\ell(M\otimes_QN)<\infty$, we define the length curvature of the pair $(M,N)$ to be the curvature of $\Po^Q_{M,N}(t)$. 

Notice that for any two power series $p$ and $q$ with real coefficients, we have
$$ \cur(p\cdot q) \leq \max\{\cur(p),\cur(q)\} \,.$$ 
This is a consequence of the fact that the radius of convergence of the product $p\cdot q$ is equal to  the minimum of the radii of convergence of $p$, $q$. 
\end{bfchunk}
 
\begin{proposition}
Let $R = Q/(f)$ where $(Q,\m, \sf k)$ is a  local ring with $\chara({\sf k}) = 0$ and $(f,g)$ is an exact pair of zero divisors in $Q$. Let $M$, $N$ be finitely generated $R$-modules $M$, $N$ such that $gN = 0$ and $\ell(M \otimes_Q N) < \infty$. 

If $\ell\cur_Q(M,N) \geq 1$ and $\ell\cur_R(M,N)\geq 1$ then 
\begin{align}
\ell \cur_Q(M,N) = \ell \cur_R(M,N)\,.
\end{align}
\end{proposition}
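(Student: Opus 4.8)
The plan is to derive the equality directly from the two coefficient-wise inequalities established in \cref{theorem}, combined with the behavior of curvature under multiplication of power series recorded in \ref{curvature}.

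First I would record two elementary observations. If $p(t)=\sum_n a_nt^n$ and $q(t)=\sum_n b_nt^n$ have non-negative real coefficients and $p\preccurlyeq q$, then $\cur(p)\le\cur(q)$; this is immediate from $0\le a_n\le b_n$ and the formula $\cur(p)=\limsup_n\sqrt[n]{|a_n|}$. Second, $\cur\big((1-t)^{-1}\big)=1$ since its radius of convergence is $1$, while $\cur(1-t+t^2)=0$ since it is a polynomial; hence by the inequality $\cur(p\cdot q)\le\max\{\cur(p),\cur(q)\}$ from \ref{curvature} we get $\cur\big((1-t+t^2)(1-t)^{-1}\big)\le\max\{0,1\}=1$.

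Next I would apply \eqref{poincare1}. Since all the Poincar\'e series involved have non-negative coefficients (they are generating series of lengths of $\Tor$ modules), the first observation together with the second gives
\[
\ell\cur_R(M,N)=\cur\big(\Po^R_{M,N}\big)\le \cur\Big(\dfrac{(1-t+t^2)\Po^Q_{M,N}(t)}{1-t}\Big)\le\max\{\ell\cur_Q(M,N),\,1\}\,,
\]
where the last step again uses submultiplicativity of curvature. By the hypothesis $\ell\cur_Q(M,N)\ge 1$ the maximum equals $\ell\cur_Q(M,N)$, so $\ell\cur_R(M,N)\le\ell\cur_Q(M,N)$. Symmetrically, applying \eqref{poincare2} yields
\[
\ell\cur_Q(M,N)=\cur\big(\Po^Q_{M,N}\big)\le\cur\Big(\dfrac{\Po^R_{M,N}(t)}{1-t}\Big)\le\max\{\ell\cur_R(M,N),\,1\}=\ell\cur_R(M,N)\,,
\]
using $\ell\cur_R(M,N)\ge 1$ in the final equality. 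Combining the two inequalities gives the claimed equality.

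The argument has no serious obstacle; the only points requiring care are the monotonicity of curvature under $\preccurlyeq$ (which relies crucially on non-negativity of coefficients) and the role of the two hypotheses $\ell\cur_Q(M,N)\ge 1$ and $\ell\cur_R(M,N)\ge 1$, which are precisely what collapses the two maxima. If either curvature were $<1$ the method would only bound the other by $1$, and one cannot expect equality in that regime, mirroring the analogous phenomenon for regular elements recalled in the introduction.
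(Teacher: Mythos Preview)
Your proof is correct and follows essentially the same route as the paper: apply the two coefficient-wise inequalities \eqref{poincare1} and \eqref{poincare2} together with the submultiplicativity of curvature from \ref{curvature}, then use the hypotheses $\ell\cur_Q(M,N)\ge 1$ and $\ell\cur_R(M,N)\ge 1$ to collapse the maxima. Your write-up is in fact slightly more explicit than the paper's, since you spell out the monotonicity of curvature under $\preccurlyeq$ for series with non-negative coefficients, a step the paper leaves implicit.
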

\begin{proof}
From \eqref{poincare1}, \eqref{poincare2} and the hypothesis $\ell\cur_Q(M,N) \geq 1$ and $\ell\cur_R(M,N)\geq 1$, we get the following inequalities: 
\begin{align*}
    \ell \cur_R(M,N) &\leq \max \{\ell \cur_Q(M,N), \cur(1-t+t^2), \cur((1-t)^{-1}) \} \\
    & = \max \{\ell \cur_Q(M,N), 1 \}\\
    &=\ell\cur_Q(M,N)\\ 
    \ell \cur_Q(M,N) &\leq \max \{\ell \cur_R(M,N) , \cur((1-t)^{-1})\} \\
    &=\max \{\ell \cur_R(M,N), 1\}\\
    &=\ell\cur_R(M,N)\,. \qedhere
\end{align*} 
\end{proof}

\begin{remark}
When $f \in Q$ is a regular element, we know $\cur_Q(M) = \cur_R(M)$ when $\pd_Q(M)=\infty$, see \cite[Proposition 4.2.5]{Luchonotes}. Our result implies similar behavior when $f$ is an exact zero divisor, namely $\cur_Q(M) = \cur_R(M)$ when $\pd_Q(M)=\infty$ and $\pd_R(M)=\infty$
\end{remark}
\section{Vanishing of homology and other applications}
\label{s:vanishing}
In this section we present more applications of \cref{mth}. The setting is, as before, that $(Q,\m, \kk)$ is a local ring, $(f,g)$ is an exact pair of zero divisors in $Q$, $R=Q/(f)$ and $M, N$ are finitely generated $R$-modules with $gN=0$. The first result relates vanishing of homology over $R$ to vanishing of homology over $Q$. Then, we study the maps induced in homology by the inclusion $\n N\subseteq N$, and we use this exploration to give in \cref{final} a formula for $\Po_{M,N}^Q(t)$ in a special case.

The next proposition recovers the result of \cite[Theorem 2.1]{hzerodiv} and extends it, by requiring a smaller range of vanishing in the hypothesis.

\begin{proposition} \label{vh}
Let $(Q,\m,\kk)$ be a local ring with $\chara({\sf k})=0$, $(f,g)$ an exact pair of zero divisors, and set $R = Q/(f)$. Let $M$, $N$ be $R$-modules such that $gN = 0$. If there exists integers $n, m\geq 0$ with $n - m\ge 1$ and $$
\Tor_i^R(M,N) = 0 \qquad\text{for  }\quad  m \leq i \leq n
$$
then 
$$\Tor_{i-1}^Q(M,N) \cong \Tor_{i+1}^Q(M,N)
\qquad\text{for}\quad  m \leq i \leq n - 2
$$
and there is an exact sequence:
$$ \Tor^Q_{m-2}(M,N) \xrightarrow{}  \Tor^Q_{m}(M,N) \xrightarrow{} \Tor^R_{m-1}(M,N) \xrightarrow{} \Tor^Q_{m-3}(M,N).$$
Furthermore, if $m=1$, then 
$$M \otimes_Q N \cong \Tor_i^Q(M,N) \qquad \text{for} \quad 1 \leq i \leq n - 1.$$
\end{proposition}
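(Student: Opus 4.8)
The plan is to feed the vanishing hypothesis into the two long exact sequences from \cref{mth} and read off the claimed isomorphisms and exact sequence. First I would look at the first long exact sequence
\[
\cdots \to \Tor_{n}^R(M,N)\to \Ho_{n}(W\otimes_SN)\xrightarrow{\psi_n}\Tor_{n}^R(M,N)\xrightarrow{\delta_n}\Tor_{n-2}^R(M,N)\to\cdots
\]
Under the hypothesis that $\Tor_i^R(M,N)=0$ for $m\le i\le n$, the terms $\Tor_i^R(M,N)$ vanish in that range, so this sequence forces $\Ho_{i}(W\otimes_SN)=0$ for $m+1\le i\le n-1$ (one needs two consecutive vanishing $\Tor^R$'s flanking $\Ho_i(W\otimes_SN)$, which is why the index range shrinks by one on each side). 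More carefully, $\Ho_i(W\otimes_SN)$ sits between $\Tor_{i-1}^R(M,N)$ and $\Tor_i^R(M,N)$; both vanish as soon as $m\le i-1$ and $i\le n$, i.e.\ $m+1\le i\le n$, so actually $\Ho_i(W\otimes_SN)=0$ for $m+1\le i\le n$ — I should double-check the exact endpoints against the indexing in \eqref{long}, but the upshot is a clean range of vanishing for the mapping-cone homology.

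Next I would substitute this vanishing into the second long exact sequence
\[
\cdots\to \Ho_{n+1}(W\otimes_SN)\xrightarrow{\mu_{n+1}}\Tor_{n-2}^Q(M,N)\to \Tor_{n}^Q(M,N)\xrightarrow{\phi_n}\Ho_{n}(W\otimes_SN)\to\cdots
\]
Whenever both $\Ho_{n+1}(W\otimes_SN)$ and $\Ho_{n}(W\otimes_SN)$ vanish, the map $\Tor_{n-2}^Q(M,N)\to \Tor_{n}^Q(M,N)$ is an isomorphism; tracking which $n$ make both of these mapping-cone homologies vanish gives precisely the range $m\le i\le n-2$ for the stated isomorphism $\Tor_{i-1}^Q(M,N)\cong \Tor_{i+1}^Q(M,N)$ (after a shift of index). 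For the four-term exact sequence at the bottom edge $m$, I would isolate the relevant window of the second long exact sequence around degree $m$: since $\Ho_{m+1}(W\otimes_SN)$ may be nonzero but can be replaced using the first sequence (where it is sandwiched between $\Tor_m^R=0$ and $\Tor_{m+1}^R$, hence maps isomorphically, or rather injects into, $\Tor_{m-1}^R(M,N)$ via $\psi$ — here one uses that $\Tor_m^R(M,N)=0$ makes $\psi_{m+1}$... I need to be careful), I can splice the two sequences together near degree $m$ to obtain
\[
\Tor^Q_{m-2}(M,N)\to \Tor^Q_m(M,N)\to \Tor^R_{m-1}(M,N)\to \Tor^Q_{m-3}(M,N).
\]
The point is that in this range $\Ho_m(W\otimes_SN)$ is squeezed so that the first sequence gives $\Ho_m(W\otimes_SN)\cong$ (a sub/quotient of) $\Tor_{m-1}^R(M,N)$, and plugging that identification into the second sequence yields the displayed four-term complex.

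Finally, for the last assertion, setting $m=1$ means $\Tor_i^R(M,N)=0$ for $1\le i\le n$, so $\Ho_i(W\otimes_SN)=0$ for $2\le i\le n$, and the isomorphisms $\Tor_{i-1}^Q(M,N)\cong \Tor_{i+1}^Q(M,N)$ hold for $1\le i\le n-2$. Chaining these isomorphisms and anchoring at $\Tor_0^Q(M,N)=M\otimes_QN$ gives $M\otimes_QN\cong \Tor_i^Q(M,N)$ for all $i$ in the appropriate range; I would verify the parity bookkeeping so that both even and odd degrees up to $n-1$ are reached, starting the even chain from $\Tor_0^Q$ and the odd chain from $\Tor_1^Q$, and noting that the four-term sequence at $m=1$ reads $\Tor_{-1}^Q=0\to \Tor_1^Q\to \Tor_0^R=0$ forcing... actually at $m=1$ one gets $\Tor_1^Q(M,N)\cong M\otimes_QN$ directly from an edge argument, which seeds the odd chain. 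The main obstacle I anticipate is purely bookkeeping: getting all the index endpoints exactly right (the shifts by $1$ and $2$, the parity of the chain of isomorphisms, and the precise splicing that produces the four-term sequence) rather than any conceptual difficulty — the two long exact sequences of \cref{mth} do essentially all the real work.
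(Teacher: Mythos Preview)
Your proposal is correct and follows essentially the same route as the paper: extract the vanishing of $\Ho_i(W\otimes_S N)$ for $m+1\le i\le n$ from the first long exact sequence of \cref{mth}, then feed this into the second to obtain the isomorphisms and, via the identification $\Ho_m(W\otimes_S N)\cong \Tor_{m-1}^R(M,N)$ (which is a genuine isomorphism, not merely a sub/quotient, since $\Tor_{m+1}^R(M,N)=0$ as well), the four-term exact sequence. Your bookkeeping worries are unfounded---the endpoints work out exactly as you guessed, and for $m=1$ the low-degree end of the second sequence gives $\Tor_1^Q(M,N)\cong \Ho_1(W\otimes_S N)\cong \Tor_0^R(M,N)=M\otimes_QN$, which seeds the odd chain.
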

\begin{proof}
Under the assumptions in the statement, \cref{mth} gives a long exact sequence
$$\Tor_{i+2}^R(M,N) \to \Tor_{i}^R(M,N) \to \Ho_{i+1}(W \otimes_S N) \to \Tor_{i+1}^R(M,N) \to \Tor_{i-1}^R(M,N) $$ from which we obtain: 
\begin{equation} \label{vh-cong}
    \Ho_{i+1}(W \otimes_S N) \cong \begin{cases}
    0 & \text{for } m \leq i \leq n-1\\
    \Tor^R_{i+1}(M,N) & \text{for } i = n \\
    \Tor^R_{i}(M,N) & \text{for } i = m - 1
    \end{cases}
\end{equation}
The second exact sequence of \cref{mth} is  
$$
\cdots \xrightarrow{} \Ho_{i+1}(W \otimes_S N) \xrightarrow{\mu_{i+1}^N} \Tor_{i-2}^Q(M,N) \xrightarrow{} \Tor^Q_{i}(M,N) \xrightarrow{\phi_n^N} \Ho_{i}(W \otimes_S N) \xrightarrow{} \cdots 
$$
The exact sequence in the statement and the isomorphisms $\Tor_{i-1}^Q(M,N) \cong \Tor_{i+1}^Q(M,N)$ for $m\leq i\leq n$ follow by using \eqref{vh-cong} into this sequence. 

When $m=1$, we have thus
\begin{equation*}
    \Tor_i^Q(M,N) \cong \begin{cases}
    M \otimes_Q N & \text{when $i$ is even}\\
    \Tor_1^Q(M,N) & \text{when $i$ is odd}\\
    \end{cases}
\end{equation*}
for  $1 \leq i \leq n-1$. 
Using again \eqref{vh-cong} in the second sequence from \cref{mth}, we conclude
\begin{align*}
&\Ho_{0}(W \otimes_S N) \cong M \otimes_Q N \cong \Ho_{1}(W \otimes_S N) \cong \Tor^Q_{1}(M, N) \,. \qedhere
\end{align*}
\end{proof} 

\begin{bfchunk}{Induced maps.} 
Let  $M, N, P$ be $ Q$-modules,  $\phi: M \to N$  a homomorphism and  $U$ a complex of $Q$-modules. We let then $U \otimes_Q \phi$ and 
$\Tor_i^Q(P, \phi)$ denote the induced maps
\begin{align*}
U \otimes_Q \phi &: U \otimes_Q M \to U \otimes_Q N \\
\Tor_i^Q(P, \phi) &: \Tor_i^Q(P, M) \to \Tor_i^Q(P, N) \,.
\end{align*}
Further, let 
$$\nu_N: \m N \to N  \quad \text{and} \quad \pi_N: N \to \overline{N} = N/\m N$$
denote the inclusion and the projection map respectively. 
\end{bfchunk}
As mentioned in the proof of \cref{connec}, equality holds in \eqref{poincare2} when $\m N=0$ and $f\notin \m^2$.  
In the next result, we identify another situation in which equality holds in \eqref{poincare2}.
\begin{proposition} \label{vanish}
Let $(Q, \m, \kk)$ be a local ring with $\chara({\sf k}) = 0$ and $(f,g)$  an exact pair of zero divisors with $f\in \m\smallsetminus \m^2$. Set $R = Q/(f)$ and let $M$, $N$ be $R$-modules such that $\m^2N = 0 = gN$. 

If $\Tor_i^R(M,\nu_N) = 0$ for all $i\ge 0$  then $\Tor_i^Q(M,\nu_N) = 0$ for all $i\ge 0$ and 
$$\Po^Q_{M,N}(t)=\frac{\Po^R_{M,N}(t)}{1-t}\,.$$
\end{proposition}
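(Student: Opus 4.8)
The strategy is to deduce the vanishing $\Tor_i^Q(M,\nu_N)=0$ from the upper bound \eqref{poincare2} in \cref{theorem}, and then to read off the Poincaré series identity as a by-product. I would begin by recording two elementary observations: since $\m^2N=0$, both $\m N$ and $\overline N=N/\m N$ are annihilated by $\m$, hence are finite-dimensional $\kk$-vector spaces, say $\dim_\kk\overline N=a$ and $\dim_\kk\m N=b$, so that by additivity of $\Tor$ over finite direct sums $\Po^Q_{M,\m N}(t)=b\,\Po^Q_{M,\kk}(t)$ and $\Po^Q_{M,\overline N}(t)=a\,\Po^Q_{M,\kk}(t)$, and likewise over $R$; moreover, since $\m^2$ annihilates $N$ it annihilates $M\otimes_QN$ and every $\Tor_i^Q(M,N)$, which are therefore finitely generated modules over the Artinian ring $Q/\m^2$ and so have finite length. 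In particular \cref{theorem} applies to the pair $(M,N)$.

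Next I would exploit the hypothesis. Over $R$, the vanishing $\Tor_i^R(M,\nu_N)=0$ for all $i$ splits the long exact sequence of $\Tor^R(M,-)$ attached to $0\to\m N\to N\to\overline N\to 0$ into short exact sequences $0\to\Tor_i^R(M,N)\to\Tor_i^R(M,\overline N)\to\Tor_{i-1}^R(M,\m N)\to 0$, whence $\Po^R_{M,N}(t)=(a-bt)\,\Po^R_{M,\kk}(t)$. Over $Q$ the corresponding long exact sequence need not split; putting $r_i=\ell(\Img\Tor_i^Q(M,\nu_N))$ and counting lengths along it gives
\[
\Po^Q_{M,N}(t)=\Po^Q_{M,\overline N}(t)-t\,\Po^Q_{M,\m N}(t)+(1+t)\sum_{i\ge 0}r_it^i
=(a-bt)\,\Po^Q_{M,\kk}(t)+(1+t)\sum_{i\ge 0}r_it^i\,.
\]
Since $f\in\m\smallsetminus\m^2$ is an exact zero divisor, $\Po^Q_{M,\kk}(t)=\Po^R_{M,\kk}(t)/(1-t)$ by \cite[Theorem 6.2]{quasi} (the input used in the proof of \cref{connec}), and substituting turns the previous display into
\[
\Po^Q_{M,N}(t)=\frac{\Po^R_{M,N}(t)}{1-t}+(1+t)\sum_{i\ge 0}r_it^i\,.
\]

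Finally, the inequality \eqref{poincare2} of \cref{theorem} reads $\Po^Q_{M,N}(t)\preccurlyeq\Po^R_{M,N}(t)/(1-t)$, so the power series $(1+t)\sum_{i\ge 0}r_it^i$ has all of its coefficients $\le 0$; but its coefficient of $t^i$ is $r_i+r_{i-1}\ge 0$ (with the convention $r_{-1}=0$), so every $r_i$ is zero. As $\Img\Tor_i^Q(M,\nu_N)$ has finite length, $r_i=0$ forces $\Tor_i^Q(M,\nu_N)=0$ for all $i$, and with the terms $r_i$ gone the displayed identity collapses to $\Po^Q_{M,N}(t)=\Po^R_{M,N}(t)/(1-t)$. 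The only points needing any care are the length bookkeeping along the $\Tor^Q(M,-)$ long exact sequence and checking that the finite-length hypotheses of \cref{theorem} are met; the real content is that the already-established upper bound \eqref{poincare2} is exactly tight enough to annihilate the defect terms $r_i$.
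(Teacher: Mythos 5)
Your proof is correct, and it takes a genuinely different route from the paper's. The paper argues structurally: it tensors the mapping-cone sequences \eqref{short} and \eqref{sesq} against $0\to\m N\to N\to\overline N\to 0$, uses \cref{connec} to kill the connecting maps $\delta^{\overline N},\delta^{\m N},\mu^{\overline N},\mu^{\m N}$, and then runs the Snake Lemma and an induction through two large commutative diagrams to show that $\Ho_n(W\otimes_S\nu_N)=0$, $\Tor_n^Q(M,\nu_N)=0$ and $\mu^N=0=\delta^N$, finally invoking the equality criterion of \cref{theorem} (equality in \eqref{poincare2} iff $\mu^N=0=\delta^N$). You instead make a purely numerical argument: reduce $\Po^Q_{M,\overline N}$, $\Po^Q_{M,\m N}$ and their $R$-analogues to multiples of $\Po_{M,\kk}$ using that $\m N$ and $\overline N$ are $\kk$-vector spaces, use the hypothesis to split the $R$-sequence, do an honest length count along the $Q$-sequence with defect terms $r_i=\ell(\Img\Tor_i^Q(M,\nu_N))$, convert via $\Po^Q_{M,\kk}=\Po^R_{M,\kk}/(1-t)$ (the same input from \cite[Theorem 6.2]{quasi} that underlies \cref{connec}), and squeeze against the already-proved inequality \eqref{poincare2} to force $r_i=0$. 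Your length bookkeeping ($\ell(T_i)=r_i+r_{i-1}+\ell(B_i)-\ell(A_{i-1})$, giving the factor $(1+t)\sum r_it^i$) checks out, as does the finite-length verification. What the paper's route buys is the extra structural information that $\Ho(W\otimes_S\nu_N)$, $\mu^N$ and $\delta^N$ all vanish; what yours buys is brevity and the avoidance of the diagram chases, using only the inequality half of \cref{theorem} rather than its equality criterion. One cosmetic remark: state explicitly that $M$, $N$ are finitely generated (as the section's standing hypothesis asserts), since both your finite-length argument and the well-definedness of the Poincar\'e series depend on it.
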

\begin{proof} 
Set $S=R/(g)$. Consider the  short exact sequence of $R$-modules:
\begin{equation}\label{sesm}
0 \to \m N \xrightarrow{\nu_N} N \to \overline{N} \to 0
\end{equation} where $\overline{N} = N/\m N$.  This sequence remains exact when tensoring with the complexes of free $S$-modules $\Sigma V\otimes_QS$, $W$, respectively $V\otimes_QS$. We also consider the sequence  of complexes of  free $S$-modules \eqref{short} and we tensor with $\m N$, $N$, respectively $\overline N$; note the resulting sequences also remain exact. We obtain thus a commutative diagram of complexes with exact rows and columns, where the columns are induced from \eqref{sesm} and the rows are induced by \eqref{short}.  We omit writing this diagram, and we write directly the following diagram induced in homology, cf.~\cite[Chapter IV, Proposition 2.1]{cartan}.   

\begin{equation*}
    \begin{tikzcd}[column sep=5mm, row sep=5mm]
     \cdots \arrow[r, "\delta_{n+2}^N"] & \Tor_{n}^R(M,{N}) \arrow{r} \arrow[d, hook] &  \Ho_{n+1}(W \otimes_S {N}) \arrow[r] \arrow[d]&  \Tor_{n+1}^R(M,{N}) \arrow[r, "\delta^{N}_{n+1}"] \arrow[d, hook]& \cdots\\
        \cdots \arrow[r,"\delta^{\overline{N}}_{n+2}","0"'] & \Tor_{n}^R(M,\overline{N}) \arrow[r, hookrightarrow] \arrow[d, twoheadrightarrow] & \arrow[d] \Ho_{n+1}(W \otimes_S \overline{N}) \arrow[r,twoheadrightarrow] & \arrow[d, twoheadrightarrow] \Tor_{n+1}^R(M,\overline{N}) \arrow[r,"\delta^{\overline{N}}_{n+1}","0"'] & \cdots\\
       \cdots \arrow[r,"\delta^{\m N}_{n+1}","0"']  & \Tor_{n-1}^R(M,{\m N}) \arrow[r, hookrightarrow] \arrow[d,"0"',"{\Tor_{n-1}^R(M,\nu_N)}"] & \arrow[d, "{\Ho_{n}(W \otimes_S \nu_N)}"] \Ho_{n}(W \otimes_S {\m N}) \arrow[r, twoheadrightarrow] & \arrow[d,"0"',"{\Tor_{n}^R(M,\nu_N)}"] \Tor_{n}^R(M,{\m N}) \arrow[r,"\delta^{\m N}_{n}","0"'] & \cdots \\
      \cdots \arrow[r] & \Tor_{n-1}^R(M,{N}) \arrow{r}  &  \Ho_{n}(W \otimes_S {N}) \arrow{r} &  \Tor_{n}^R(M,{N}) \arrow[r] & \cdots
    \end{tikzcd}  
\end{equation*}
In this diagram, all rows and columns are exact.  \cref{connec} shows that the connecting homomorphisms $\delta^{\overline N}$ and $\delta^{\m N}$ are zero. The maps $\Tor^R_n(M,\nu_N)$ are zero for all $n$, by hypothesis. Thus, some of the maps in the diagram are injective, respectively surjective, as indicated. Applying Snake lemma to the middle two rows of the commutative diagram above for all values of $n$, we get
\begin{equation} \label{res}
\Ho_{n}(W \otimes_S \nu_N) = 0  \qquad \text{and}\qquad \partial_{n+1}^N=0\qquad \text{for all } n \,.
\end{equation}

We now use the short exact sequences \eqref{sesq} and \eqref{sesm} to similarly get the following commutative diagram with exact rows and columns.
\begin{equation*}
        \begin{tikzcd}[column sep=5mm, row sep=5mm]   
           {} \arrow[r,"\mu_{n+3}^N"] &\Tor_n^Q(M,N) \arrow[d] \arrow[r] & \Tor_{n+2}^Q(M,{ N}) \arrow[r] \arrow[d] & \arrow[d,hook] \Ho_{n+2}(W \otimes_S {N}) \arrow[r,"\mu^{N}_{n+2}"]& {} \\
           {} \arrow[r,"0"',"\mu_{n+3}^{\overline{N}}"] &\Tor_n^Q(M,\overline{N}) \arrow[d] \arrow[r, hookrightarrow] & \Tor_{n+2}^Q(M,\overline{N}) \arrow[r, twoheadrightarrow] \arrow[d] &  \Ho_{n+2}(W \otimes_S \overline{N}) \arrow[d, twoheadrightarrow] \arrow[r,"0"',"\mu_{n+2}^{\overline{N}}"] & {}\\
            {} \arrow[r, "{\mu_{n+2}^{\m N}}", "0"'] & \Tor_{n-1}^Q(M,\m N) \arrow[r, hookrightarrow] \arrow[d, "{\Tor_{n-1}^Q(M,\nu_N)}"'] & \Tor_{n+1}^Q(M,{\m N}) \arrow[r, twoheadrightarrow]\arrow[d, "{\Tor_{n+1}^Q(M,\nu_N)}"'] &  \Ho_{n+1}(W \otimes_S {\m N}) \arrow[r, "\mu_{n+1}^{\m N}", "0"']\arrow[d,"{\Ho_{n+1}(W\otimes_S\nu_N)}"',"0"]&{}\\
            {} \arrow{r} &\Tor_{n-1}^Q(M,N) \arrow[r] & \Tor_{n+1}^Q(M,{ N}) \arrow[r]  &  \Ho_{n+1}(W \otimes_S {N}) \arrow[r,"\mu^{N}_{n+1}"]& {} 
        \end{tikzcd}  
\end{equation*}
 \cref{connec} shows that the connecting homomorphisms $\mu^{\overline N}$ and $\mu^{\m N}$ are zero, and \eqref{res} gives $\Ho_{n+1}(W \otimes_S \nu_N) = 0$.
Thus, some of the maps in the diagram are injective, respectively surjective, as indicated.

We prove by induction on $n$ that $\Tor_n^Q(M,\nu_N)=0$ for all $n$. The statement is obviously true when $n=-1$ and $n=-2$. Let $n\ge -1$ and assume that $\Tor_i^Q(M,\nu_N)=0$ for all $i\le n$, and in particular for $i=n-1$. A use of the Snake Lemma for the middle two rows of the diagram gives that $\Tor_{n+1}^Q(M,\nu_N)=0$. This finishes the induction. Another use of the Snake Lemma for the same rows also yields $\mu^N_{n+2}=0$ for all $n$.

Then \cref{theorem} shows that equality must hold in \eqref{poincare2}. 
\end{proof}

\begin{bfchunk}{Hilbert Series and Koszul modules.}\label{koszul:def} Let $(Q,\m, \sf k)$ be a local ring. We define the Hilbert series of $M$ over $Q$ as
$$\Hi_M(t) = \sum_{n = 0}^{\infty} \dim_{\sf k}(\m^{n}M/\m^{n+1}M)t^n \,.$$

A $Q$-module $M$ is said to be {\it Koszul} if the associated graded module $\gr(M)$ over the associated graded ring $\gr(Q)$ has a linear resolution. When $M$ is Koszul, \cite[1.8]{herzog} gives the formula
\begin{equation}\label{koszul:p}
\Po^Q_M(t)=\frac{\Hi_M(-t)}{\Hi_R(-t)} \,.
\end{equation}
\end{bfchunk}

A similar formula is not known for generalized Poincar\'e series $\Po^Q_{M,N}(t)$ when both modules $M$, $N$ are Koszul.

One of the few situations when it is possible to find expressions for $\Po^Q_{M,N}(t)$ is when the square of the maximal ideal is zero, which is the hypothesis in the lemma below. This result will allow us to verify some of the hypotheses in \cref{final}, in order to further compute generalized Poincar\'e series over rings with the cube of the maximal ideal equal to $0$. 

\begin{lemma}
\label{n2=0}
If $(R,\n,\kk)$ is a local ring with $\n^2=0$, and $M$, $N$ are $R$-modules such that $\n(M\otimes_RN)=0$, then $\Tor^R_n(M,\nu_N)=0$ for all $n$ and 
$$
\Po^R_{M,N}(t)=\frac{\Hi_M(-t)\Hi_N(-t)}{\Hi_R(-t)}. 
$$
\end{lemma}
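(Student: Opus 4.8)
The plan is to compute everything from a minimal free resolution $(F_\bullet,\partial)$ of $M$ over $R$, using that $\n^2=0$ forces every positive syzygy of $M$, and every module of the form $\n P$, to be a $\kk$-vector space. For a finitely generated $R$-module $P$ put $\mu(P)=\dim_\kk P/\n P$ and $\nu(P)=\dim_\kk\n P$; since $\n^2=0$, both $P/\n P$ and $\n P$ are killed by $\n$, so $P/\n P\cong\kk^{\mu(P)}$, $\n P\cong\kk^{\nu(P)}$, and $\Hi_P(t)=\mu(P)+\nu(P)t$; in particular $\Hi_R(t)=1+et$ with $e=\dim_\kk\n$. Write $F_n=R^{b_n}$ with $b_n=\be_n^R(M,\kk)$ the $n$-th Betti number of $M$; minimality gives $\partial_n(F_n)\subseteq\n F_{n-1}$, hence $\n\cdot\Omega^nM\subseteq\n^2F_{n-1}=0$, so $\Omega^nM\cong\kk^{b_n}$ for $n\ge 1$, where $\Omega^nM=\operatorname{im}\partial_n$ (and $\Omega^0M=M$).

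The first and main step is $\Tor^R_n(M,\nu_N)=0$ for all $n$. Compute $\Tor^R_n(M,N)=\Ho_n(F_\bullet\otimes_RN)$ and $\Tor^R_n(M,\n N)=\Ho_n(F_\bullet\otimes_R\n N)$. Each $\partial_n$ has entries in $\n$ and $\n\cdot\n N=0$, so $F_\bullet\otimes_R\n N$ has zero differential; thus $\Tor^R_n(M,\n N)=F_n\otimes_R\n N\cong(\n N)^{b_n}$, and $F_n\otimes_R\nu_N$ carries it onto $\n(F_n\otimes_RN)=(\n N)^{b_n}\subseteq F_n\otimes_RN$. Hence it suffices to prove $\operatorname{im}(\partial_{n+1}\otimes_RN)=\n(F_n\otimes_RN)$ for all $n$, since then the image of $F_n\otimes_R\nu_N$ consists of boundaries and the induced map on homology vanishes. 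For the equality: tensoring $0\to\Omega^{n+1}M\to F_n\to\Omega^nM\to 0$ with $N$ and using that $\partial_{n+1}$ is $F_{n+1}\twoheadrightarrow\Omega^{n+1}M\hookrightarrow F_n$ gives $\operatorname{im}(\partial_{n+1}\otimes_RN)=\ker(F_n\otimes_RN\to\Omega^nM\otimes_RN)$. Now $\Omega^nM\otimes_RN$ is killed by $\n$ (for $n\ge 1$ since $\Omega^nM$ is, and for $n=0$ because $\n(M\otimes_RN)=0$ by hypothesis), so the surjection $F_n\otimes_RN\to\Omega^nM\otimes_RN$ factors through $(F_n\otimes_RN)/\n(F_n\otimes_RN)\cong(N/\n N)^{b_n}$; this quotient and $\Omega^nM\otimes_RN$ are $\kk$-vector spaces of the same dimension $b_n\mu(N)$ — for $n\ge 1$ because $\Omega^nM\cong\kk^{b_n}$, and for $n=0$ because the hypothesis forces $M\otimes_RN\cong\kk^{\mu(M)\mu(N)}$ with $\mu(M)=b_0$. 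A surjection between finite-dimensional $\kk$-vector spaces of the same dimension is bijective, so $\ker(F_n\otimes_RN\to\Omega^nM\otimes_RN)=\n(F_n\otimes_RN)$, which is the desired equality; hence $\Tor^R_n(M,\nu_N)=0$ for all $n$.

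The rest is bookkeeping. From $0\to\n N\xrightarrow{\nu_N}N\to N/\n N\to 0$ together with $\Tor^R_\bullet(M,\nu_N)=0$, the long exact sequence for $\Tor^R(M,-)$ breaks into short exact sequences $0\to\Tor^R_n(M,N)\to\Tor^R_n(M,N/\n N)\to\Tor^R_{n-1}(M,\n N)\to 0$; since $N/\n N\cong\kk^{\mu(N)}$, $\n N\cong\kk^{\nu(N)}$, and $\Tor$ commutes with finite direct sums, this gives $\ell(\Tor^R_n(M,N))=b_n\mu(N)-b_{n-1}\nu(N)$ (with $b_{-1}=0$), so $\Po^R_{M,N}(t)=(\mu(N)-\nu(N)t)\,\Po^R_M(t)=\Hi_N(-t)\,\Po^R_M(t)$. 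Finally, every finitely generated module over a local ring with $\n^2=0$ is Koszul, so \eqref{koszul:p} yields $\Po^R_M(t)=\Hi_M(-t)/\Hi_R(-t)$ — alternatively, re-running the first two steps with $\kk$ in place of $M$ and $M$ in place of $N$, using $\be^R_n(\kk,\kk)=e^n$ (so $\Po^R_\kk(t)=1/\Hi_R(-t)$) and $\Po^R_{\kk,M}(t)=\Po^R_M(t)$, gives the same formula. Substituting, $\Po^R_{M,N}(t)=\Hi_M(-t)\Hi_N(-t)/\Hi_R(-t)$.

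The only real obstacle is the identity $\operatorname{im}(\partial_{n+1}\otimes_RN)=\n(F_n\otimes_RN)$ in the middle step, and within it the case $n=0$, which is precisely where the hypothesis $\n(M\otimes_RN)=0$ enters; everything else is the structure theory of $\n^2=0$ rings plus formal manipulations of exact sequences and generating functions. I would also note that $M$ and $N$ should be taken finitely generated (consistent with the section's standing hypotheses), so that $R$ is Artinian, all lengths and Hilbert-series coefficients are finite, and the equal-dimension surjection argument is valid.
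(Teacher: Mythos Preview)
Your proof is correct. The computation of the Poincar\'e series in your final paragraph is essentially identical to the paper's, so the only substantive comparison is in the proof that $\Tor^R_n(M,\nu_N)=0$.

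There you take a genuinely different route. You work with a minimal free resolution $F_\bullet$ of $M$, observe that $F_\bullet\otimes_R\n N$ has zero differential, and reduce to showing $\operatorname{im}(\partial_{n+1}\otimes_RN)=\n(F_n\otimes_RN)$ via a dimension count over $\kk$. The paper instead fixes a surjection $\varphi\colon R^a\twoheadrightarrow N$, notes that the induced map $\psi\colon \n R^a\to \n N$ is a split surjection of $\kk$-vector spaces (so $\Tor^R_i(M,\psi)$ is onto), and then reads off $\Tor^R_i(M,\nu_N)=0$ for $i>0$ from the commutative square
\[
\begin{tikzcd}
\Tor^R_i(M,\n R^a)\arrow[r,twoheadrightarrow,"\Tor^R_i(M{,}\psi)"]\arrow[d] & \Tor^R_i(M,\n N)\arrow[d,"\Tor^R_i(M{,}\nu_N)"] \\
\Tor^R_i(M,R^a)\arrow[r] & \Tor^R_i(M,N)
\end{tikzcd}
\]
using that the lower left corner vanishes; the case $i=0$ is handled directly by the hypothesis. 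The paper's argument is shorter and avoids any explicit computation with syzygies or dimension counts; your argument is more hands-on but has the virtue of actually identifying $\operatorname{im}(\partial_{n+1}\otimes_RN)$, which makes the structure of $F_\bullet\otimes_RN$ completely transparent. Your closing remark about finite generation is apt and matches the paper's standing conventions.
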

\begin{proof}
Let $\varphi\colon R^a\to N$ be a surjective homomorphism and let $\psi\colon \n R^a\to \n N$ be the map induced by $\varphi$. Note that $\psi$ is a surjective map of $\kk$-vector spaces, and hence it is split, implying that $\Tor_i^R(M,\psi)$ is surjective for all $i$.  We have a commutative diagram, where the vertical arrow on the left is induced by the inclusion $\n R^a\subseteq R^a$:  
\begin{equation*}
	\begin{tikzcd}[column sep=18mm, row sep=5mm]   
		& \Tor_i^R(M,\n R^a) \arrow[d] \arrow[r, twoheadrightarrow, "{\Tor_i^R(M,\psi)}"] & \Tor_{i}^R(M,\n N) \arrow[d, "{\Tor_i^R(M,\nu_N)}"] \\
		& \Tor_i^R(M, R^a) \arrow[r, "{\Tor_i^R(M,\varphi)}"] & \Tor_i^R(M, N)
	\end{tikzcd}
\end{equation*}
Since $\Tor_i^R(M,R^a)=0$ when $i>0$, it follows that $\Tor_i^R(M, \nu_N)=0$ for all $i>0$. The hypothesis that $\n(M\otimes_RN)=0$ implies that 
$\Tor_0^R(M, \nu_N)=0$ as well. 

Given the hypothesis $\n^2=0$, it is clear from the definition that any finitely generated $R$-module is Koszul. Furthermore, for all $i$, the fact that $\Tor_i^R(M, \nu_N)=0$ implies we have a short exact sequence in homology, which is induced from \eqref{sesm}:
$$0 \to \Tor_{i}^R(M,N) \xrightarrow{} \Tor_{i}^R(M,\overline{N}) \xrightarrow{} \Tor_{i-1}^R(M,\n N) \to 0 \,.$$
A rank count in this exact sequence, together with the formula \eqref{koszul:p},  gives
\begin{align*}
\Po_{M,N}^R(t) &= \Po_{M,\overline{N}}^R(t) -t\Po_{M,\n N}^R(t) \\
&= \ell(\overline{N})\Po_{M,{\sf k}}^R(t) - \ell(\n N)t\Po_{M,{\sf k}}^R(t) \\
&= \Po_{M,{\sf k}}^R(t)\cdot \left(\ell(\overline{N}) - \ell(\n N)t\right) \\
&= \dfrac{\Hi_M(-t)\Hi_N(-t)}{\Hi_R(-t)}\,. \qedhere
\end{align*}
\end{proof}

\begin{bfchunk}{Short local rings.}
Let $(Q,\m,\kk) $ be a local Artinian ring. We say that $Q$ is {\it short} if $\m^3 = 0$. While one may perhaps expect relatively simple homological behavior over such rings, this expectation is only partially met; see the introduction of \cite{avramov} for an overview of known bad behavior over such rings. An element $f\in Q$ such that $f^2=0$ and $f\m=\m^2$ is called a {\it Conca generator} in \cite{avramov}.  
Note that rings that admit a Conca generator are necessarily short. In \cite[Theorem 1.1]{avramov} it is proved that if $Q$ has a Conca generator, then $\Po_M^Q(t)$ is rational, with denominator $\Hi_Q(-t)$, for all finitely generated $R$-modules $M$. 

While not all short rings admit a Conca generator, Conca \cite[Theorem 10]{conca2} shows that when $\kk$ is algebraically closed a generic quadratic standard graded algebra $Q$ with $\dim_{\kk} Q_2<\dim_{\kk} Q_1$ admits a Conca generator. 

All Gorenstein short rings with $\kk$ algebraically closed have a Conca generator, see \cite[Theorem 4.1]{avramov}, and in this case 
Menning and \c{S}ega \cite[Theorem 3.1]{melissa} proved that $\Po_{M,N}^Q(t)$ is rational, with denominator $\Hi_Q(-t)$, for all finitely generated $R$-modules $M$, $N$. While this kind of rationality result is rather trivial when $\m^2=0$, as seen in \cref{n2=0}, rationality does not hold for all short rings since there exist short local rings $Q$ for which $\Po_{\kk}^Q(t)$ is irrational, cf.~Anick \cite{anick}. It remains an open question whether all generalized Poincar\'e series are rational in the case of short local rings with a Conca generator. Our work below brings some evidence towards a positive answer. 
\end{bfchunk}

In what follows we  consider short local rings that admit exact zero divisors. We first show in \cref{ez:hil} that existence of exact zero divisors in a short local ring restricts the Hilbert series of the ring. On the other hand, if $Q$ is a short standard graded algebra with $\Hi_Q(t)=1+et+(e-1)t^2$ that has a Conca generator (which is necessarily an exact zero divisor, as seen below), it follows from \cite[Proposition 5.4]{short}  that a generic linear form $f$ is an exact zero divisor. These observations make the point that the hypotheses of our final result, \cref{final}, are satisfied in many situations. 

\begin{lemma}\label{ez:hil}
Let $(Q, \m, \kk)$ is a short local ring, and let $e$ denote the minimal number of generators of $\m$. 

\begin{enumerate}
    \item If there exists an exact pair $(f,g)$ of zero divisors in $Q$, then
$$
\Hi _Q(t)=1+et+(e-1)t^2 \quad\text{ and } \quad f\m=\m^2\, ,\, g\m=\m^2\,,\, f,g\notin \m^2
$$
\item If $\Hi _Q(t)=1+et+(e-1)t^2$, then $(f,f)$ is an exact pair of zero divisors in $Q$ if and only if $f$ is a Conca generator. 
\end{enumerate}
\end{lemma}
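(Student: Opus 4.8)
The plan is to prove the two assertions of \cref{ez:hil} by a careful analysis of the structure imposed on $\m$ by the relations $\ann_Q(f)=(g)$ and $\ann_Q(g)=(f)$, using the short ($\m^3=0$) hypothesis throughout.

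\textbf{Part (1).} First I would observe that $f\notin\m^2$: if $f\in\m^2$, then $f\m\subseteq \m^3=0$, so $\m\subseteq \ann_Q(f)=(g)$, forcing $\m=(g)$ (as $g\in\m$ because $g$ is a zero divisor in a local ring, hence $g\in\m$), so $Q$ would be a principal ideal ring, hence a DVR modulo nilpotents; but a short ring that is principal has $\m^2=(g^2)$ and the only exact zero divisors are of a very restricted type — more directly, $f\in\m^2=(g^2)$ means $f=ug^2$ for a unit $u$ (or $f=0$), and then $gf=ug^3=0$ automatically while $g\cdot g = g^2\neq 0$ unless $\m^2=0$; one checks $\ann(g)\supsetneq (f)$, a contradiction. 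Symmetrically $g\notin\m^2$. Next I would show $f\m=\m^2$. Since $\m f\subseteq \m^2$ always, I need the reverse inclusion. Consider the exact sequence coming from the Tate resolution \eqref{A}: over $Q$, the module $R=Q/(f)$ has free resolution $\cdots \to Q\xrightarrow{g} Q\xrightarrow{f} Q\xrightarrow{g} Q\xrightarrow{f} Q\to R\to 0$, so $(f):\m$ behavior is governed by $\ann(f)=(g)$. The key computation: from $\ann_Q(f)=(g)$ we get an exact sequence $0\to Q/(g)\xrightarrow{f} Q\to Q/(f)\to 0$, hence $\ell(Q/(f))+\ell(Q/(g))=\ell(Q)+\ell(Q/(f)\cap\ldots)$ — more cleanly, $\ell(Q)=\ell((f))+\ell(Q/(f))$ and $\ell((f))=\ell(Q/\ann(f))=\ell(Q/(g))=\ell(Q)-\ell((g))$, which gives $\ell((f))+\ell((g))=\ell(Q)$, i.e. $\ell((f))+\ell((g))=\ell(Q)$. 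Now $(f)\cong Q/(g)$ as $Q$-modules, so $\ell((f))=\ell(Q)-\ell((g))$ consistent; the point is the Hilbert function. Write $e=\dim_\kk\m/\m^2$ and $s=\dim_\kk\m^2$ (recall $\m^3=0$), so $\ell(Q)=1+e+s$. Since $f\notin\m^2$, the submodule $(f)=fQ$ has $fQ\cap\m^2 \supseteq f\m$ and $fQ/(fQ\cap\m^2)\cong \kk$ (generated by the image of $f$), so $\ell(fQ)=1+\dim_\kk f\m$. But $fQ\cong Q/(g)$, which has length $1+e+s-\ell((g))$; and by the same reasoning $\ell((g))=1+\dim_\kk g\m$. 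So $1+\dim_\kk f\m = 1+e+s-(1+\dim_\kk g\m)$, i.e. $\dim_\kk f\m+\dim_\kk g\m=e+s-1$. Since $f\m,g\m\subseteq\m^2$ we have $\dim_\kk f\m\le s$ and $\dim_\kk g\m\le s$; combined with the sum being $e+s-1$ this would force (if $e\le s$) ... here I need the Hilbert-series conclusion $s=e-1$, which should drop out once I also bound things from below. The cleanest route: use that $\m/(f)\cdot$ gives $\m_R=\m/(f)$ with $\dim_\kk \m_R/\m_R^2 = e-1$ or $e$ depending on whether $f\in\m^2$; since $f\notin\m^2$, $\dim_\kk\m_R/\m_R^2=e-1$. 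And $R$ being a quotient of a short ring with $f\notin\m^2$, one shows $\m_R^2=\m^2/(\m^2\cap(f))=\m^2/f\m$, so $\dim_\kk\m_R^2=s-\dim_\kk f\m$. Now iterate: $f$ acts on $R$, and $g$ (image of $g$ in $R$) is again... actually a slicker approach is to invoke that exact zero divisors in $R=Q/(f)$ descend, but I think the cleanest is the length/Hilbert bookkeeping above combined with $\dim_\kk f\m \le \dim_\kk \m = e$ from surjectivity considerations forces $s=e-1$, $\dim_\kk f\m = \dim_\kk g\m = e-1 = s$, i.e. $f\m=g\m=\m^2$.

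\textbf{Part (2).} Assume $\Hi_Q(t)=1+et+(e-1)t^2$. For the forward direction, if $(f,f)$ is an exact pair of zero divisors then by part (1) we get $f\notin\m^2$ and $f\m=\m^2$, and $\ann_Q(f)=(f)$ gives $f^2=0$; these are precisely the defining conditions of a Conca generator, so $f$ is a Conca generator. For the converse, suppose $f$ is a Conca generator, so $f^2=0$, $f\m=\m^2$, $f\notin\m^2$. I must show $\ann_Q(f)=(f)$. Clearly $(f)\subseteq\ann_Q(f)$ since $f^2=0$. For the reverse, take $x\in\ann_Q(f)$; if $x$ is a unit then $f=0$, impossible (since $f\notin\m^2\neq\m^2$... actually $f\notin\m^2$ forces $f\neq 0$), so $x\in\m$. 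Write $x = \lambda f + y$ where $y$ is chosen so that... hmm, I want to show $x\in (f)$, i.e. $x\in\m\cap\ann(f)$ lies in $fQ$. Using $f\m=\m^2$: the map $\m\xrightarrow{\cdot f}\m^2$ is surjective with kernel $\m\cap\ann(f)=\ann(f)\cap\m = \ann(f)$ (since $\ann(f)\subseteq\m$). So $\dim_\kk(\m/\ker)=\dim_\kk\m^2=e-1$, hence $\dim_\kk\ker = e-(e-1)=1$ — wait, $\ker$ here is $\ann(f)\cap\m$ viewed inside $\m$, but as a $\kk$-vector space after modding... I need to be careful: $\m$ is not a $\kk$-vector space. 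Better: the induced map $\bar\cdot f:\m/\m^2\to\m^2/\m^3=\m^2$ is surjective ($e$-dim onto $(e-1)$-dim), so its kernel is $1$-dimensional, spanned by the image of $f$ (since $f\m=\m^2$ but $f\cdot f=0$, so $f\mapsto 0$; and $f\notin\m^2$ so its image in $\m/\m^2$ is nonzero). Thus $\{x\in\m: fx\in\m^3=0\} = \{x\in\m:fx=0\}$ projects to $\kk\cdot\bar f$ in $\m/\m^2$, meaning $\ann(f)\cap\m \subseteq \kk f + \m^2$. But $\m^2=f\m\subseteq fQ=(f)$, and $f\in(f)$, so $\ann(f)\cap\m\subseteq (f)$. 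Since $\ann(f)\subseteq\m$, we conclude $\ann(f)=(f)$. The same argument with roles of the two copies of $f$ symmetric (it is the same element) gives that $(f,f)$ is an exact pair.

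The main obstacle I anticipate is the length/Hilbert-function bookkeeping in part (1): getting the clean conclusion $s=e-1$ and $f\m=g\m=\m^2$ requires pinning down $\dim_\kk\m_R^2$ and $\dim_\kk f\m$ precisely, and one must be careful that the natural surjections are isomorphisms in the right degrees (using $\m^3=0$ heavily so that $f\m^2=0$ kills higher terms). Once the Hilbert series is fixed, the rest is the short verification that "Conca generator" and "exact pair $(f,f)$" unpack to the same three conditions, which is essentially formal.
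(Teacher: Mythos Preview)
Your Part (2) is correct; in fact your element-wise argument for the converse (showing that the kernel of multiplication by $f$ on $\m/\m^2$ is spanned by $\bar f$, hence $\ann(f)\subseteq (f)+\m^2=(f)$) is a perfectly good alternative to the paper's length count $\ell(\ann_Q(f))=e=\ell((f))$.

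Part (1), however, has a genuine gap that your own closing paragraph anticipates. Your bookkeeping yields
\[
\dim_\kk f\m+\dim_\kk g\m=e+s-1,\qquad \dim_\kk f\m,\ \dim_\kk g\m\le \min\{s,e\},
\]
but these constraints only pin down $s\in\{e-1,e,e+1\}$; they do \emph{not} force $s=e-1$, contrary to what you assert at the end. The missing idea is precisely the kernel computation you carry out in Part~(2): the map $\m/\m^2\xrightarrow{\cdot f} f\m$ is surjective (since $f\m^2=0$) and its kernel is the image of $\ann_Q(f)=(g)$ in $\m/\m^2$, which is one-dimensional because $g\notin\m^2$. This gives $\dim_\kk f\m=e-1$ outright; plugging into your equation then yields $s=e-1$ and $f\m=\m^2$. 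The paper proceeds exactly this way (via the short exact sequence $0\to ((g)+\m^2)/\m^2\to \m/\m^2\to f\m\to 0$), then reads off $\ell((f))=\ell((g))=e$ and $\ell(Q)=2e$.

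A smaller issue: your argument that $f\notin\m^2$ ends with the claim that in the principal case $\m=(g)$ one has $\ann(g)\supsetneq(f)$. That claim is false; for instance in $Q=\kk[x]/(x^3)$ with $(f,g)=(x^2,x)$ one checks $\ann(x)=(x^2)=(f)$ exactly. The clean argument (used in the paper) is to first dispose of $e=1$ separately and then, for $e\ge2$, note that $f\in\m^2$ forces $\m=(g)$, contradicting $e\ge2$.
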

\begin{proof}
(1) If $e=1$, then $\m=(f)=(g)$ and hence $\m^2=0$. The statement is thus clear. Assume now $e\ge 2$.
Indeed, observe first that $g\notin \m^2$, since $g\in \m^2$ implies $g\m=0$, and hence $(f)=\ann_Q(g)=\m$. This is a contradiction, since $e\ge 2$. Similarly, $f\notin \m^2$

Since $\m^3=0$, note that $\m(f \m)=0$. We have a short exact sequence of $\kk$-vector spaces 
$$
0\to \frac{(g)+\m^2}{\m^2}\to \frac{\m}{\m^2}\xrightarrow{\cdot f}  f\m\to 0\,.
$$ 
Since $\dim_{{\sf k}}\m/\m^2=e$ and $$\dim_{\kk}\frac{(g)+\m^2}{\m^2}=1\,,$$
a rank count in this exact sequence yields the second equality below:
$$
\ell(f\m)=\dim_{\kk}(f \m)=e-1\,.
$$
Then, a dimension count in the short exact sequence 
$$
0\to f\m \hookrightarrow  (f)\to \frac{(f)}{f\m}\to 0
$$
gives
$$
\ell(f)=\ell(f \m)+\ell((f)/f\m)=(e-1)+1=e\,.
$$
Similarly, we have $\ell(g)=e$. 
The hypothesis that $(f,g)$ is an exact pair implies $Q/(f)\cong (g)$, and hence we have an exact sequence 
$$
0\to (f)\to Q\to (g)\to 0\,.
$$
A  length count in this sequence gives 
$$
\ell(Q)=\ell(f)+\ell(g)=2e\,.
$$
Since $\ell(Q)=1+\dim_{\kk}(\m/\m^2)+\dim_{\kk}(\m^2)$, we conclude $\dim_{\kk}(\m^2)=e-1$. This establishes the desired formula for $\Hi_Q(t)$. Finally, the inclusion $f\m\subseteq \m^2$ and the fact that $\dim_{\kk}(f\m)=e-1=\dim_{\kk}(\m^2)$ implies $f\m=\m^2$. Similarly, $g\m=\m^2$. 

(2) Assume $\Hi _Q(t)=1+et+(e-1)t^2$. If $(f,f)$ is an exact pair of zero divisors, then the desired conclusion follows from (1). For the converse, assume that $f^2=0$ and $f\m=\m^2$. Since $(f)\subseteq \ann_Q(f)$, it suffices to show that $(f)$ and $\ann_Q(f)$ have the same length. 

The hypothesis that $\Hi _Q(t)=1+et+(e-1)t^2$ implies $$\ell(\m)=2e-1,\qquad \ell(f\m)=\ell(\m^2)=e-1\qquad \text{and}\qquad  \ell(Q)=2e\,.
$$
Using a length count in the exact sequence
\[
0\to \ann_Q(f)\hookrightarrow \m\xrightarrow{\cdot f} f\m\to 0
\]
we obtain 
$$
\ell(\ann_Q(f))=\ell(\m)-\ell(f \m)=2e-1-(e-1)=e\,,
$$
and then a length count in the exact sequence 
$$
0 \to \ann_Q(f) \hookrightarrow Q \xrightarrow{\cdot f} (f) \to 0
$$
gives 
\begin{align*}
\ell(f)&=\ell(Q)-\ell(\ann_Q(f))=2e-e=e=\ell(\ann_Q(f))\,.\qedhere
\end{align*}
\end{proof}

 In \cite[Theorem 3.2]{avramov} it is proved that if $Q$ is a short local ring admitting a Conca generator $f$, then any finitely generated $Q$-module $M$ with $fM=0$ is Koszul, and thus we have an equality  $\Po_M^Q(t)= \Hi_M(-t)/\Hi_Q(-t)$. \cref{final} below extends this property to generalized Poincar\'e series, in the case when $Q$ admits exact zero divisors (and thus a Conca generator is also an exact zero divisor, by \cref{ez:hil}).

\begin{theorem} \label{final}
Let $(Q,\m, {\sf k})$ be a short local ring. Assume $\chara({\sf k}) = 0$, $(f,g)$ is an exact pair of zero divisors and set $R=Q/(f)$. Let $M,N$ be finitely generated $R$-modules such that $\m(M \otimes_R N) = 0$ and $gN=0$. 

Then 
$$\Po^Q_{M,N}(t) = \dfrac{\Hi_M(-t)\Hi_N(-t)}{\Hi_Q(-t)} \,.$$
\end{theorem}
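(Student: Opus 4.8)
The plan is to deduce the formula by combining three earlier results: \cref{vanish}, which under appropriate hypotheses gives $\Po^Q_{M,N}(t)=\Po^R_{M,N}(t)/(1-t)$; \cref{n2=0}, which computes $\Po^R_{M,N}(t)$; and the Hilbert series identity $(1-t)\Hi_R(-t)=\Hi_Q(-t)$. The observation that unlocks everything is that $R=Q/(f)$ has square-zero maximal ideal: if $\n=\m/(f)$ denotes the maximal ideal of $R$, then \cref{ez:hil}(1) gives $\m^2=f\m\subseteq(f)$, so $\n^2=(\m^2+(f))/(f)=0$.

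First I would collect the facts needed to feed the two cited results. Since $Q$ is short it is Artinian, so every finitely generated module has finite length; in particular $\ell(M\otimes_QN)<\infty$ and the Poincar\'e series involved are defined. By \cref{ez:hil}(1) we have $f\in\m\smallsetminus\m^2$, $\m^2=f\m$, and $\Hi_Q(t)=1+et+(e-1)t^2$, where $e$ is the minimal number of generators of $\m$. As $M$ and $N$ are $R$-modules, $fM=0=fN$, hence $\m^2M=f\m M=0$ and $\m^2N=f\m N=0$; consequently the Hilbert series of $M$ and of $N$ are the same whether computed over $Q$ or over $R$, since $\m$ and $\n$ act identically on these modules. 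Moreover $\m$ acts on $M\otimes_RN$ through $\n$, so the hypothesis $\m(M\otimes_RN)=0$ reads $\n(M\otimes_RN)=0$. Because $\n^2=0$, \cref{n2=0} applies to the ring $R$ and yields both $\Tor^R_i(M,\nu_N)=0$ for all $i\ge 0$ and
\[
\Po^R_{M,N}(t)=\frac{\Hi_M(-t)\Hi_N(-t)}{\Hi_R(-t)}\,.
\]

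Next I would apply \cref{vanish}: its hypotheses are now all verified --- $\chara(\kk)=0$, the pair $(f,g)$ is an exact pair of zero divisors with $f\in\m\smallsetminus\m^2$, $M$ and $N$ are $R$-modules with $\m^2N=0=gN$, and $\Tor^R_i(M,\nu_N)=0$ for all $i\ge 0$ --- so it gives $\Po^Q_{M,N}(t)=\Po^R_{M,N}(t)/(1-t)$. It then remains to identify $(1-t)\Hi_R(-t)$ with $\Hi_Q(-t)$. Since $\n^2=0$, $\Hi_R(t)=1+\dim_\kk(\n)\,t$. Now $(f)/\m^2=(f)/\m(f)$ is a cyclic $\kk$-module generated by the image of $f$, hence one-dimensional because $f\notin\m^2$; together with $\m^2\subseteq(f)\subseteq\m$ this gives $\dim_\kk\n=\dim_\kk(\m/\m^2)-\dim_\kk((f)/\m^2)=e-1$. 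Therefore $\Hi_R(t)=1+(e-1)t$ and
\[
(1-t)\Hi_R(-t)=(1-t)\bigl(1-(e-1)t\bigr)=1-et+(e-1)t^2=\Hi_Q(-t)\,.
\]
Combining the last three displays,
\[
\Po^Q_{M,N}(t)=\frac{1}{1-t}\cdot\frac{\Hi_M(-t)\Hi_N(-t)}{\Hi_R(-t)}=\frac{\Hi_M(-t)\Hi_N(-t)}{\Hi_Q(-t)}\,.
\]

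I do not expect a real obstacle here: the argument is an orchestration of \cref{ez:hil}, \cref{n2=0} and \cref{vanish}, with the only new computation being the elementary Hilbert series bookkeeping above. The step that needs the most care is checking the hypothesis $\Tor^R_i(M,\nu_N)=0$ of \cref{vanish}; this is precisely what \cref{n2=0} provides, but only once one notices that passing from $Q$ to $R$ absorbs $\m^2$ into $(f)$ and hence makes $\n^2$ vanish.
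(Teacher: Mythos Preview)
Your proof is correct and follows essentially the same approach as the paper: both invoke \cref{ez:hil} to obtain $f\notin\m^2$, $f\m=\m^2$, and the Hilbert series of $Q$, then apply \cref{n2=0} over $R$ (which has $\n^2=0$) to secure $\Tor^R_i(M,\nu_N)=0$ and the formula for $\Po^R_{M,N}(t)$, and finally use \cref{vanish} together with the identity $(1-t)\Hi_R(-t)=\Hi_Q(-t)$. Your write-up is in fact more explicit than the paper's in verifying the hypotheses of \cref{n2=0} and \cref{vanish} and in computing $\Hi_R(t)$.
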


\begin{proof}
Let $e$ denote the minimal number of generators of $\m$. By \cref{ez:hil}, we have $f\notin \m^2$, $f\m=\m^2$ and 
$$
\Hi_Q(t)=1+et+(e-1)t^2. 
$$
We have then $\Hi_R(t)=1+(e-1)t$, and hence 
\begin{align*}
 \Hi_Q(t)&=(1+t)\Hi_R(t). 
\end{align*}

Note that  $\m^2N \subseteq fM = 0$. \cref{n2=0} gives that $\Tor_i^R(M,\nu_N) = 0$ for all $i\ge 0$, and then the conclusion follows from \cref{vanish} and \cref{n2=0}. 
\end{proof}

\end{document}